\numberwithin{equation}{section}
\theoremstyle{definition}
\numberwithin{equation}{section}
\newcommand{\ncom}{\newcommand}
\ncom{\beq}{\begin{equation}}
\ncom{\eeq}{\end{equation}}
\ncom{\bea}{\begin{eqnarray*}}
\ncom{\eea}{\end{eqnarray*}}
\ncom{\beqa}{\begin{eqnarray}}
\ncom{\eeqa}{\end{eqnarray}}
\ncom{\nno}{\nonumber}
\ncom{\non}{\nonumber}
\ncom{\ds}{\displaystyle}
\ncom{\half}{\frac{1}{2}}
\ncom{\mbx}{\makebox{.25cm}}
\ncom{\hs}{\mbox{\hspace{.25cm}}}
\ncom{\rar}{\rightarrow}
\ncom{\Rar}{\Rightarrow}
\ncom{\noin}{\noindent}
\ncom{\bc}{\begin{center}}
\ncom{\ec}{\end{center}}
\ncom{\sz}{\scriptsize}
\ncom{\rf}{\ref}
\ncom{\s}{\sqrt{2}}
\ncom{\sgm}{\sigma}
\ncom{\Sgm}{\Sigma}
\ncom{\psgm}{\sigma^{\prime}}
\ncom{\dt}{\delta}
\ncom{\Dt}{\Delta}
\ncom{\lmd}{\lambda}
\ncom{\Lmd}{\Lambda}
\ncom{\Th}{\Theta}
\ncom{\e}{\eta}
\ncom{\eps}{\epsilon}
\ncom{\pcc}{\stackrel{P}{>}}
\ncom{\lp}{\stackrel{L_{p}}{>}}
\ncom{\dist}{{\rm\,dist}}
\ncom{\sspan}{{\rm\,span}}
\ncom{\re}{{\rm Re\,}}
\ncom{\im}{{\rm Im\,}}
\ncom{\sgn}{{\rm sgn\,}}
\ncom{\ba}{\begin{array}}
\ncom{\ea}{\end{array}}
\ncom{\hone}{\mbox{\hspace{1em}}}
\ncom{\htwo}{\mbox{\hspace{2em}}}
\ncom{\hthree}{\mbox{\hspace{3em}}}
\ncom{\hfour}{\mbox{\hspace{4em}}}
\ncom{\vone}{\vskip 2ex}
\ncom{\vtwo}{\vskip 4ex}
\ncom{\vonee}{\vskip 1.5ex}
\ncom{\vthree}{\vskip 6ex}
\ncom{\vfour}{\vspace*{8ex}}
\ncom{\norm}{\|\;\;\|}
\ncom{\integ}[4]{\int_{#1}^{#2}\,{#3}\,d{#4}}
\ncom{\vspan}[1]{{{\rm\,span}\{ #1 \}}}
\ncom{\dm}[1]{ {\displaystyle{#1} } }
\ncom{\ri}[1]{{#1} \index{#1}}
\newtheorem{theorem}{\bf Theorem}[section]
\newtheorem{remark}{\bf Remark}[section]
\newtheorem{proposition}{Proposition}[section]
\newtheorem{definition}{Definition}[section]
\newtheoremstyle
    {remarkstyle}
    {}
    {11pt}
    {}
    {}
    {\bfseries}
    {:}
    {     }
    {\thmname{#1} \thmnumber{#2} }
\theoremstyle{remarkstyle}
\def\eps{\varepsilon}
\begin{document}
\title{On the Long-Range Dependence of Mixed Fractional Poisson Process}
\author[Kuldeep Kumar Kataria]{K. K. Kataria}
\address{Kuldeep Kumar Kataria, Department of Mathematics,
	Indian Institute of Technology Bhilai, Raipur-492015, India.}
\email{kuldeepk@iitbhilai.ac.in}
\author[Mostafizar Khandakar]{M. Khandakar}
\address{Mostafizar Khandakar, Department of Mathematics,
	Indian Institute of Technology Bhilai, Raipur-492015, India.}
\email{mostafizark@iitbhilai.ac.in}
%\thanks{}
\date{January 14, 2020.}
\subjclass[2010]{Primary : 60G22; Secondary: 60G55. }
\keywords{mixed fractional Poisson process; mixed fractional Poissonian noise;  LRD property, SRD property.}
\begin{abstract}
In this paper, we show that the mixed fractional Poisson process (MFPP) exhibits the long-range dependence (LRD) property. It is proved by establishing an asymptotic result for the covariance of inverse mixed stable subordinator. Also, it is shown that the increments of the MFPP, namely, the mixed fractional Poissonian noise (MFPN) has the short-range dependence (SRD) property.
\end{abstract}

\maketitle
\section{Introduction}
The most commonly used point process for modelling counting phenomenon is the  Poisson process. This L\'evy process is characterized by the light-tailed distribution of its waiting times. However, it fails to model the random phenomenon possessing long memory.  It is observed that the process with power law decay offers a better model for such phenomenon. The recent increasing interest on random time changed and subordinated processes has resulted in the construction of such processes.

Several authors have considered the time changed stochastic processes whose one dimensional distributions satisfy certain fractional differential equations (see Beghin and Orsingher (2009), Meerschaert {\it et al.} (2011), Orsingher and Polito (2012), {\it etc}). These subordinated processes are known as the fractional Poisson processes (FPP). Biard and Saussereau (2014), Maheshwari and Vellaisamy (2016) have studied the long-range dependence (LRD) and short-range dependence (LRD) properties for certain FPP and their increments. Beghin (2012) and Aletti {\it et al.} (2018) introduce and study the mixed fractional version of homogeneous Poisson process known as the mixed fractional Poisson process (MFPP). It is obtained by considering independent random time-change of Poisson process with inverse mixed stable subordinator. 

The main aim of this paper is to show that the MFPP has the long-range dependence (LRD) property. For this purpose, an asymptotic result for the covariance of the inverse mixed stable subordinator is obtained. The corresponding asymptotic results for inverse stable subordinator obtained by Leonenko $\textit{et al.}$ (2014) follow as particular cases of our results. Also, it is shown that the increments of the MFPP which we call as the mixed fractional Poissonian noise (MFPN) has the short-range dependence (SRD) property. As LRD and SRD properties have many known applications, we expect the results obtained in this paper to have potential applications in different fields, such as, for example, finance, hydrology, econometrics, {\it etc}. 
\section{Preliminaries}
Here, we give some known results about the Mittag-Leffler function and its generalizations, the theory of subordinator and its inverse. Also, the common definition of LRD and SRD properties for a non-stationary process is given.
\subsection{Mittag-Leffler function}
The two-parameter Mittag-Leffler function $E_{\alpha,\beta}(.)$ is defined as (see Mathai and Haubold (2008)) 
\begin{equation}\label{mit2}
	E_{\alpha, \beta}(x)\coloneqq\sum_{k=0}^{\infty} \frac{x^{k}}{\Gamma(k\alpha+\beta)},\ \ \alpha>0,\ \beta>0,\ x\in\mathbb{R}.
\end{equation}
It reduces to one-parameter Mittag-Leffler function for $\beta=1$. It has been generalized to three-parameter Mittag-Leffler function as
\begin{equation}\label{mit3}
	E_{\alpha, \beta}^{\gamma}(x)\coloneqq\frac{1}{\Gamma(\gamma)}\sum_{k=0}^{\infty} \frac{\Gamma(k+\gamma)x^{k}}{k!\Gamma(k\alpha+\beta)},\ \ \alpha>0,\ \beta>0,\ \gamma>0,\ x\in\mathbb{R}.
\end{equation}
When $\gamma=1$, it reduces to two-parameter Mittag-Leffler function. For more details on Mittag-Leffler function and its generalizations  we refer the reader to Mathai and Haubold (2008), Garra and Garrappa (2018).

The following asymptotic result holds for three-parameter Mittag-Leffler function when $\beta\neq\alpha\gamma$ (see Beghin (2012), Eq. (2.44)):
\begin{equation*}
	E_{\alpha, \beta}^{\gamma}\left(-\lambda t^{\alpha}\right)=\frac{\lambda^{-\gamma}t^{-\alpha \gamma}}{\Gamma(\beta-\alpha\gamma)}+o(t^{-\alpha \gamma}),\ \ t\to\infty.
\end{equation*}
From the above equation it follows that
\begin{equation}\label{3.4}
	E_{\alpha, \beta}^{\gamma}\left(-\lambda t^{\alpha}\right)\sim\frac{\lambda^{-\gamma}t^{-\alpha \gamma}}{\Gamma(\beta-\alpha\gamma)},\ \ t\to\infty.
\end{equation}
Moreover, we have (see Kilbas \textit{et al}. (2004), Kataria and Vellaisamy (2019)):
\begin{align}
	\frac{\mathrm{d}^n}{\mathrm{d}x^n}\left(x^{\beta-1}E_{\alpha,\beta}(\mu x^{\alpha})\right)&=x^{\beta-n-1}E_{\alpha,\beta-n}(\mu x^{\alpha}),\ \ \mu\in\mathbb{R},\label{der}\\
	E_{\alpha, \beta}^{(n)}(x)&=n!E_{\alpha, n\alpha+\beta}^{n+1}(x),\ \ n\geq0,\label{re}
\end{align}
where $E_{\alpha, \beta}^{(n)}(x)=\dfrac{\mathrm{d}^n}{\mathrm{d}x^n}E_{\alpha, \beta}(x)$. The following result will be used later (see Eq. (2.24), Kilbas \textit{et al}. (2004)): 
\begin{equation}\label{wwde1}
	\int_{0}^{x}(x-t)^{\mu-1} E_{\rho, \mu}\left(\omega(x-t)^{\rho}\right) t^{\nu-1} E_{\rho, v}\left(\omega t^{\rho}\right)\,\mathrm{d}t=x^{\mu+v-1} E_{\rho, \mu+v}^{2}\left(\omega x^{\rho}\right),
\end{equation}
where $E_{\rho, \mu+v}^{2}(.)$ is the generalized Mittag-Leffler function defined in (\ref{mit3}).

In addition to the above, we will be using results from asymptotic analysis (see Olver (1974), Section 8).
\subsection{Subordinator and its inverse}
A one-dimensional L\'evy process with non-decreasing sample paths is called a subordinator. We denote it by $L=\{L(t)\}_{t\geq0}$. It is characterized by its Laplace transform which is given by
\begin{equation*}
	\mathbb{E}\left(e^{-sL(t)}\right)=e^{-t\phi(s)},\ \ s\geq0.
\end{equation*}
The Laplace exponent $\phi$ is 
\begin{equation*}
	{\phi(s)=\mu s+\int_{0}^{\infty}\left(1-e^{-s x}\right)\,\ell(\mathrm{d} x)},
\end{equation*}
where $\mu \geq 0$  is the drift coefficient and  $\ell$  is the L\'evy measure. A driftless subordinator ($\mu=0$) with Laplace exponent $\phi(s)=s^\alpha$ is called an $\alpha$-stable subordinator. We denote it by $L_\alpha$, $0<\alpha<1$, for more details we refer the reader to Applebaum (2004).

The first-hitting time of $L$ is a process $Y=\{Y(t)\}_{t\geq 0}$ called an inverse subordinator. It is defined as follows:
\begin{equation}\label{5}
	Y(t)=\inf \left\{s\geq0:L(s)>t\right\},\ \ t\geq 0,
\end{equation}
which is a non-decreasing process. The inverse $\alpha$-stable subordinator $Y_\alpha$, $0<\alpha<1$, is the first-hitting time of $L_\alpha$. If $L$ is strictly increasing then the sample paths of $Y$ is almost surely continuous. The Laplace transform of the renewal function $U(t)=\mathbb{E}\left(Y(t)\right)$  is obtained by Veillette and Taqqu (2010a)-(2010b) as
\begin{equation*}
	\int_{0}^{\infty}U(t)e^{-st}\,\mathrm{d}t=\frac{1}{s\phi(s)}.
\end{equation*}
The covariance of inverse subordinator $Y$ is given by (see Veillette and Taqqu (2010a))
\begin{equation}\label{covy}
	\operatorname{Cov}\left(Y(s), Y(t)\right)=\int_{0}^{s\wedge t}\big(U(s-\tau) +U(t-\tau)\big)\,\mathrm{d}U(\tau)-U(s)U(t),
\end{equation}
where $s\wedge t\coloneqq\min\{s,t\}$.
\subsection{The LRD and SRD properties}
For a non-stationary stochastic process $\{X(t)\}_{t\geq0}$ the long-range dependence and short-range dependence properties are defined as follows (see D'Ovidio and Nane (2014), Kumar {\it et al.} (2019)):
\begin{definition}
	Let $s>0$ be fixed and $t>s$. Suppose that a stochastic process $\{X(t)\}_{t\ge0}$ has the correlation that satisfies
	\begin{equation}\label{lrd}
		\operatorname{Corr}(X(s),X(t))\sim c(s)t^{-h},\  \text{as}\ t\rightarrow\infty,
	\end{equation}
	for some constant $c(s)$ whose value depends on $s$. We say that $\{X(t)\}_{t\ge0}$ has the LRD property if $h\in(0,1)$ and the SRD property if $h\in(1,2)$.
\end{definition}
\section{Mixed Fractional Poisson Process}
In this section, we first give brief details on mixed fractional Poisson process (MFPP) introduced and studied by Beghin (2012) and Aletti {\it et al.} (2018). The theory of mixture of independent stable subordinators plays a crucial role in its construction. Moreover, we require some of these results for establishing the LRD and SRD properties of MFPP and MFPN.

The mixed stable subordinator $L_{\alpha_{1}, \alpha_{2}}= \left\{L_{\alpha_{1}, \alpha_{2}}(t)\right\}_{t\geq0}$ is the process characterized by the following Laplace transform
\begin{equation}\label{3}
	\mathbb{E}\left(e^{-s L_{\alpha_{1}, \alpha_{2}}(t)}\right)=e^{-t\left(C_{1} s^{\alpha_{1}}+C_{2} s^{\alpha_{2}}\right)},\ \ s\geq0,
\end{equation}
where  $C_{1}+C_{2}=1$, $C_{1}\geq0$, $C_{2}\geq0$ and $\alpha_{2}<\alpha_{1}$.

Let $L_{\alpha_{1}}$ and $L_{\alpha_{2}}$ be two independent stable subordinators such that $0<\alpha_2<\alpha_1<1$. It is known that  
\begin{equation}\label{4}
	L_{\alpha_{1}, \alpha_{2}}(t)\overset{d}{=}\left(C_{1}\right)^{\frac{1}{\alpha_{1}}} L_{\alpha_{1}}(t)+\left(C_{2}\right)^{\frac{1}{\alpha_{2}}} L_{\alpha_{2}}(t), \quad t \geq 0,
\end{equation}
where $\overset{d}{=}$ means equal in distribution. In general, $L_{\alpha_{1}, \alpha_{2}}$ is not a self-similar process. For $\alpha_{1}=\alpha_{2}$, the process  $L_{\alpha_{1}, \alpha_{2}}$ reduces to stable subordinator (up to a constant) in which case it becomes self-similar.

The inverse mixed stable subordinator $Y_{\alpha_{1}, \alpha_{2}}= \left\{Y_{\alpha_{1}, \alpha_{2}}(t)\right\}_{t\geq0}$ is defined as  
\begin{equation}\label{5}
	Y_{\alpha_{1},\alpha_{2}}(t)=\inf \left\{s\geq0 :L_{\alpha_{1},\alpha_{2}}(s)>t\right\}, \quad t\geq 0.
\end{equation}
For $C_{2}=0$, the inverse mixed stable subordinator $Y_{\alpha_{1},\alpha_{2}}$ reduces to inverse stable subordinator $Y_{\alpha_{1}}$. A similar result holds for $C_{1}=0$.

Beghin (2012) and Leonenko $\textit{et al.}$ (2014) obtained the following expression for $U_{{\alpha_{1},\alpha_{2}}}(t)=\mathbb{E}\left(Y_{\alpha_{1},\alpha_{2}}(t)\right)$ using different techniques:
\begin{equation}\label{7}
	U_{{\alpha_{1},\alpha_{2}}}(t)=\frac{t^{\alpha_{1}}}{C_{1}}  E_{\alpha_{1}-\alpha_{2}, \alpha_{1}+1}\left(-C_{2}t^{\alpha_{1}-\alpha_{2}}/C_{1}\right),
\end{equation}
where  $E_{\alpha_{1}-\alpha_{2}, \alpha_{1}+1}(.)$  is the two-parameter Mittag-Leffler function defined in (\ref{mit2}). 

Using (\ref{der}) for $n=1$, we get
\begin{equation}\label{dU}
	\frac{\mathrm{d}}{\mathrm{d}t}U_{{\alpha_{1},\alpha_{2}}}(t)=\frac{t^{\alpha_{1}-1}}{C_{1}} E_{\alpha_{1}-\alpha_{2}, \alpha_{1}}\left(-C_{2}t^{\alpha_{1}-\alpha_{2}}/C_{1}\right).
\end{equation}
The following asymptotic result for $U_{{\alpha_{1},\alpha_{2}}}(t)$ was obtained by Veillette and Taqqu (2010b):
\begin{equation}\label{Tau}
	U_{{\alpha_{1},\alpha_{2}}}(t) \sim \begin{cases}
		\dfrac{t^{\alpha_{1}}}{C_{1} \Gamma\left(1+\alpha_{1}\right)}, & {t \rightarrow 0},\\ \dfrac{t^{\alpha_{2}}}{C_{2} \Gamma\left(1+\alpha_{2}\right)}, & {t \rightarrow \infty}.
	\end{cases}
\end{equation}

The covariance of inverse mixed stable subordinator has been obtained by Veillette and Taqqu (2010a) in the following form:
\begin{align}\label{Vei}
	\operatorname{Cov}&\left(Y_{\alpha_{1},\alpha_{2}}(s), Y_{\alpha_{1},\alpha_{2}}(t)\right)\nonumber\\
	&\ \ \ \ \ \ \ =\int_{0}^{s\wedge t}\big(U_{{\alpha_{1},\alpha_{2}}}(s-\tau) +U_{{\alpha_{1},\alpha_{2}}}(t-\tau)\big)\,\mathrm{d}U_{{\alpha_{1},\alpha_{2}}}(\tau)-U_{{\alpha_{1},\alpha_{2}}}(s)U_{{\alpha_{1},\alpha_{2}}}(t).
\end{align}
On putting $s=t$, we get
\begin{align}
	\operatorname{Var}(Y_{\alpha_{1},\alpha_{2}}(t))&=\int_{0}^{t}2U_{{\alpha_{1},\alpha_{2}}}(t-\tau)\,\mathrm{d}U_{{\alpha_{1},\alpha_{2}}}(\tau)-U^{2}_{{\alpha_{1},\alpha_{2}}}(t)\nonumber\\
	&=\frac{2}{C_{1}^{2}}\int_{0}^{t} (t-\tau)^{\alpha_{1}}\tau^{\alpha_{1}-1}E_{\alpha_{1}-\alpha_{2}, \alpha_{1}+1}\left(-C_{2}(t-\tau)^{\alpha_{1}-\alpha_{2}}/C_{1}\right)\nonumber\\
	&\ \ \ \ \ \cdot E_{\alpha_{1}-\alpha_{2}, \alpha_{1}}\left(-C_{2}\tau^{\alpha_{1}-\alpha_{2}}/C_{1}\right)\,\mathrm{d}\tau-U^{2}_{{\alpha_{1},\alpha_{2}}}(t),\ \ (\text{using}\ (\ref{7})\ \text{and}\ (\ref{dU}))\nonumber\\
	&=\frac{2t^{2\alpha_{1}}}{C_{1}^{2}} E_{\alpha_{1}-\alpha_{2}, 2\alpha_{1}+1}^{2}\left(-C_{2}t^{\alpha_{1}-\alpha_{2}}/C_{1}\right)-U^{2}_{{\alpha_{1},\alpha_{2}}}(t),\ \ (\text{using} \ (\ref{wwde1}))\label{3.3as}\\
	&=\frac{2t^{2\alpha_{1}}}{C_{1}^{2}} E_{\alpha_{1}-\alpha_{2}, \alpha_{1}+\alpha_{2}+1}^{(1)}\left(-C_{2}t^{\alpha_{1}-\alpha_{2}}/C_{1}\right)-U_{{\alpha_{1},\alpha_{2}}}^2(t),\ \ (\text{using} \ (\ref{re}))\nonumber\\
	&=\frac{2t^{2\alpha_{1}}}{C_{1}^{2}} E_{\alpha_{1}-\alpha_{2}, \alpha_{1}+\alpha_{2}+1}^{(1)}\left(-C_{2}t^{\alpha_{1}-\alpha_{2}}/C_{1}\right)-\frac{t^{2\alpha_{1}}}{C_{1}^{2}}\left(E_{\alpha_{1}-\alpha_{2},\alpha_{1}+1}\left(-C_{2}t^{\alpha_{1}-\alpha_{2}}/C_{1}\right)\right)^{2},\nonumber
\end{align}
by using (\ref{7}). Leonenko $\textit{et al.}$ (2014) obtained the above expression for the variance of mixed stable subordinator  using Laplace transform. For more details on inverse mixed stable subordinator we refer the reader to Beghin (2012).

The mixed fractional Poisson process (MFPP) is defined as follows:
\begin{equation*}
	N^{\alpha_{1}, \alpha_{2}}=\left\{N^{\alpha_{1}, \alpha_{2}}(t)\right\}_{t \geq 0}=\left\{N\left(Y_{\alpha_{1}, \alpha_{2}}(t),\lambda\right)\right\}_{t \geq 0},
\end{equation*}
where the homogeneous Poisson process $\{N(t,\lambda)\}_{t\geq0}$ and the inverse mixed stable subordinator $Y_{\alpha_{1},\alpha_{2}}$ are independent.

The mean, variance and covariance of MFPP are given by (see Aletti {\it et al.} (2018))
\begin{align}
	\mathbb{E}\left(N^{\alpha_{1}, \alpha_{2}}(t)\right)
	&=\lambda U_{{\alpha_{1},\alpha_{2}}}(t)\label{E},\\ \operatorname{Var}\left(N^{\alpha_{1}, \alpha_{2}}(t)\right)
	&=\lambda U_{\alpha_{1},\alpha_{2}}(t)+\lambda^{2}\operatorname {Var} \left(Y_{\alpha_{1}, \alpha_{2}}(t)\right),\label{V}\\
	\operatorname{Cov}\left(N^{\alpha_{1}, \alpha_{2}}(s), N^{\alpha_{1}, \alpha_{2}}(t)\right)
	&=\mathbb{E}\big(N^{\alpha_{1},\alpha_{2}}(s\wedge t)\big)+\lambda^{2}\operatorname{Cov}\left(Y_{\alpha_{1},\alpha_{2}}(s), Y_{\alpha_{1},\alpha_{2}}(t)\right)\label{C}.
\end{align}
For more details on MFPP we refer the reader to Aletti {\it et al.} (2018).
\begin{remark}
	The  mixed fractional non-homogeneous Poisson process (MFNPP) denoted by $N_{\Lambda}^{\alpha_{1}, \alpha_{2}}$ is defined as 
	\begin{equation*}
		N_{\Lambda}^{\alpha_{1}, \alpha_{2}}=\left\{N_{\Lambda}^{\alpha_{1}, \alpha_{2}}(t)\right\}_{t \geq 0}=\left\{N\left(\Lambda\left(Y_{\alpha_{1}, \alpha_{2}}(t)\right),1\right)\right\}_{t \geq 0},
	\end{equation*}
	where the homogeneous Poisson process  $\{N(t,1)\}_{t\geq0}$ is independent of the inverse mixed stable subordinator $Y_{\alpha_{1},\alpha_{2}}$. For $t\geq0$, let $\Lambda(t)$ denote a non-negative deterministic function which is both non-decreasing and right continuous. Also, $\Lambda(0)=0, \ \Lambda(\infty)=\infty$  and  $\Lambda(t)-\Lambda(t-) \leq1$. Next, we show that the MFPP is indeed a particular case of MFNPP.
	
	For $\lambda>0$, choose $\Lambda(t)=\lambda t$. Thus, we have
	\begin{align*}
		\mathrm{Pr}\{N\left(\Lambda(Y_{\alpha_{1}, \alpha_{2}}(t)),1\right)=k\}
		&=\int_{0}^{\infty}\mathrm{Pr}\{N(x,1)=k\}f_{\Lambda( Y_{\alpha_{1}, \alpha_{2}}(t))}(x)\,\mathrm{d}x\\
		&=\int_{0}^{\infty}\frac{e^{-x}x^{k}}{k!}f_{ \lambda Y_{\alpha_{1}, \alpha_{2}}(t)}(x)\,\mathrm{d}x\\
		&=\int_{0}^{\infty}\frac{e^{-x}x^{k}}{\lambda k!}f_{ Y_{\alpha_{1}, \alpha_{2}}(t)}\left(\frac{x}{\lambda}\right)\,\mathrm{d}x\\
		&=\int_{0}^{\infty}\frac{e^{-\lambda y}(\lambda y)^{k}}{k!}f_{ Y_{\alpha_{1}, \alpha_{2}}(t)}(y)\,\mathrm{d}y\\
		&=\mathrm{Pr}\{N\left(Y_{\alpha_{1}, \alpha_{2}}(t),\lambda\right)=k\}.
	\end{align*}
\end{remark}

Next, we show that the MFPP exhibits LRD property.
\subsection{Long-range dependence property of MFPP}	
\begin{proposition}
	Let $C_{1}\geq0,C_{2}\geq0$ such that $C_{1}+C_{2}=1$ and $0<\alpha_{2}<\alpha_{1}<1$. For fixed $s\ge0$, the covariance of inverse mixed stable subordinator has the following limiting behaviour:
	\begin{equation}\label{co}
		\operatorname{Cov}\left(Y_{\alpha_{1},\alpha_{2}}(s), Y_{\alpha_{1},\alpha_{2}}(t)\right)\sim\frac{s^{2\alpha_{1}}}{C_{1}^{2}}E_{\alpha_{1}-\alpha_{2}, 2\alpha_{1}+1}^{2}\left(-C_{2}s^{\alpha_{1}-\alpha_{2}}/C_{1}\right), \ \ \ t\to\infty .
	\end{equation}
\end{proposition}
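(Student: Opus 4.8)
The plan is to work directly from the covariance representation \eqref{Vei}. Since $s$ is fixed and $t\to\infty$, eventually $t>s$, so $s\wedge t=s$ and the domain of integration becomes the fixed interval $[0,s]$. Writing $U(s)U(t)=\int_0^s U_{{\alpha_{1},\alpha_{2}}}(t)\,\mathrm{d}U_{{\alpha_{1},\alpha_{2}}}(\tau)$ (using $U_{{\alpha_{1},\alpha_{2}}}(0)=0$ so that $\int_0^s\mathrm{d}U_{{\alpha_{1},\alpha_{2}}}(\tau)=U_{{\alpha_{1},\alpha_{2}}}(s)$), I would split the covariance as
\begin{align*}
	\operatorname{Cov}\left(Y_{\alpha_{1},\alpha_{2}}(s), Y_{\alpha_{1},\alpha_{2}}(t)\right)
	&=\int_{0}^{s}U_{{\alpha_{1},\alpha_{2}}}(s-\tau)\,\mathrm{d}U_{{\alpha_{1},\alpha_{2}}}(\tau)\\
	&\quad+\int_{0}^{s}\big(U_{{\alpha_{1},\alpha_{2}}}(t-\tau)-U_{{\alpha_{1},\alpha_{2}}}(t)\big)\,\mathrm{d}U_{{\alpha_{1},\alpha_{2}}}(\tau).
\end{align*}
The first integral does not depend on $t$, and the claim reduces to identifying it with the right-hand side of \eqref{co} and showing the second integral tends to $0$.

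For the $t$-independent integral I would substitute \eqref{7} for $U_{{\alpha_{1},\alpha_{2}}}(s-\tau)$ and \eqref{dU} for $\mathrm{d}U_{{\alpha_{1},\alpha_{2}}}(\tau)$. This places the integral in exactly the convolution form \eqref{wwde1} with the parameters $\rho=\alpha_{1}-\alpha_{2}$, $\mu=\alpha_{1}+1$, $\nu=\alpha_{1}$ and $\omega=-C_{2}/C_{1}$, and the two prefactors $1/C_1$ combine to give
\begin{equation*}
	\int_{0}^{s}U_{{\alpha_{1},\alpha_{2}}}(s-\tau)\,\mathrm{d}U_{{\alpha_{1},\alpha_{2}}}(\tau)=\frac{s^{2\alpha_{1}}}{C_{1}^{2}}E_{\alpha_{1}-\alpha_{2}, 2\alpha_{1}+1}^{2}\left(-C_{2}s^{\alpha_{1}-\alpha_{2}}/C_{1}\right),
\end{equation*}
which is precisely the $s$-dependent constant on the right of \eqref{co}.

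It then remains to show the second integral vanishes as $t\to\infty$. By the mean value theorem, for each $\tau\in[0,s]$ there is $\xi=\xi(\tau)\in(t-\tau,t)$ with $U_{{\alpha_{1},\alpha_{2}}}(t-\tau)-U_{{\alpha_{1},\alpha_{2}}}(t)=-\tau\,U_{{\alpha_{1},\alpha_{2}}}'(\xi)$. From \eqref{dU} and the Mittag-Leffler asymptotics \eqref{3.4} (taken with $\gamma=1$) one obtains $U_{{\alpha_{1},\alpha_{2}}}'(\xi)\sim \xi^{\alpha_{2}-1}/\big(C_{2}\Gamma(\alpha_{2})\big)$; since $\xi\ge t-s$ and $\alpha_{2}<1$, this decays to $0$ uniformly in $\tau\in[0,s]$. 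Bounding the integral by
\begin{equation*}
	\left|\int_{0}^{s}\big(U_{{\alpha_{1},\alpha_{2}}}(t-\tau)-U_{{\alpha_{1},\alpha_{2}}}(t)\big)\,\mathrm{d}U_{{\alpha_{1},\alpha_{2}}}(\tau)\right|\le s\,U_{{\alpha_{1},\alpha_{2}}}(s)\sup_{\xi\ge t-s}\big|U_{{\alpha_{1},\alpha_{2}}}'(\xi)\big|\longrightarrow 0
\end{equation*}
completes the proof.

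The main obstacle I anticipate is the cancellation concealed in the second integral. By \eqref{Tau} both $\int_0^s U_{{\alpha_{1},\alpha_{2}}}(t-\tau)\,\mathrm{d}U_{{\alpha_{1},\alpha_{2}}}(\tau)$ and $U_{{\alpha_{1},\alpha_{2}}}(s)U_{{\alpha_{1},\alpha_{2}}}(t)$ individually grow like $t^{\alpha_{2}}$, so the decay to $0$ is invisible until the two are merged into the single difference $U_{{\alpha_{1},\alpha_{2}}}(t-\tau)-U_{{\alpha_{1},\alpha_{2}}}(t)$. Making this rigorous requires the first-order rate of growth of $U_{{\alpha_{1},\alpha_{2}}}$, namely the $\xi^{\alpha_{2}-1}$ decay of its derivative supplied by \eqref{3.4}, rather than the leading-order estimate \eqref{Tau} alone; the leading $t^{\alpha_2}$ behaviour is exactly what the subtraction kills.
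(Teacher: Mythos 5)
Your proof is correct, and it diverges from the paper's in the one step that actually matters. Both arguments start identically: reduce to \eqref{Vei} with $s\wedge t=s$, and evaluate the $t$-independent piece $\int_0^s U_{\alpha_1,\alpha_2}(s-\tau)\,\mathrm{d}U_{\alpha_1,\alpha_2}(\tau)$ via \eqref{7}, \eqref{dU} and the convolution identity \eqref{wwde1}, yielding exactly the constant in \eqref{co}. The difference is in disposing of the remainder $\int_0^s U_{\alpha_1,\alpha_2}(t-\tau)\,\mathrm{d}U_{\alpha_1,\alpha_2}(\tau)-U_{\alpha_1,\alpha_2}(t)U_{\alpha_1,\alpha_2}(s)$. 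The paper expands both Mittag--Leffler factors of the integral $I(s,t)$ as a double power series, integrates term by term to get incomplete Beta functions \eqref{qazq}, and applies the small-argument expansion of $B(a,b;s/t)$ to extract $U_{\alpha_1,\alpha_2}(t)U_{\alpha_1,\alpha_2}(s)$ as the leading term in \eqref{wdwe}--\eqref{4.2}. You instead fold the product $U_{\alpha_1,\alpha_2}(t)U_{\alpha_1,\alpha_2}(s)$ back under the integral sign and bound the single difference $U_{\alpha_1,\alpha_2}(t-\tau)-U_{\alpha_1,\alpha_2}(t)$ by the mean value theorem together with $U_{\alpha_1,\alpha_2}'(\xi)\sim \xi^{\alpha_2-1}/\bigl(C_2\Gamma(\alpha_2)\bigr)$ from \eqref{dU} and \eqref{3.4}. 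Your route is more elementary (no term-by-term integration of a double series, no incomplete Beta asymptotics) and it confronts head-on the point you correctly flag as delicate: an equivalence $I(s,t)\sim U_{\alpha_1,\alpha_2}(t)U_{\alpha_1,\alpha_2}(s)$ by itself does not force the difference to vanish, and the paper's final substitution of \eqref{4.2} into \eqref{4.1} is stated loosely in exactly this respect (the justification is really hidden in the second-order term of \eqref{wdwe}). What the paper's heavier computation buys in exchange is the explicit coefficient of the correction, $-t^{\alpha_2-1}K(s)$, which is recycled later in \eqref{covs} to prove the SRD property of the MFPN; your MVT bound gives the $O(t^{\alpha_2-1})$ rate but not that explicit constant, so it suffices for this proposition but would need sharpening to replace the paper's argument downstream.
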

\begin{proof}
	Let $s\ge0$ be fixed such that $s\le t$. Using  covariance formula (\ref{Vei}), we get
	\begin{align}\label{4.1}
		\operatorname{Cov}&\left(Y_{\alpha_{1},\alpha_{2}}(s), Y_{\alpha_{1},\alpha_{2}}(t)\right) \nonumber\\
		&=\int_{0}^{s}\big(U_{{\alpha_{1},\alpha_{2}}}(t-\tau) +U_{{\alpha_{1},\alpha_{2}}}(s-\tau)\big)\,\mathrm{d} U_{{\alpha_{1},\alpha_{2}}}(\tau)-U_{{\alpha_{1},\alpha_{2}}}(t)U_{{\alpha_{1},\alpha_{2}}}(s)\nonumber\\
		&=\frac{1}{C_{1}}\int_{0}^{s}\Big(U_{{\alpha_{1},\alpha_{2}}}(t-\tau)+ \frac{(s-\tau)^{\alpha_{1}}}{C_{1}} E_{\alpha_{1}-\alpha_{2}, \alpha_{1}+1}\left(-C_{2}(s-\tau)^{\alpha_{1}-\alpha_{2}}/C_{1}\right)\Big)\nonumber\\
		& \ \ \ \cdot\tau^{\alpha_{1}-1} E_{\alpha_{1}-\alpha_{2}, \alpha_{1}}\left(-C_{2}\tau^{\alpha_{1}-\alpha_{2}}/C_{1}\right)\,\mathrm{d}\tau  -U_{{\alpha_{1},\alpha_{2}}}(t)U_{{\alpha_{1},\alpha_{2}}}(s),\nonumber\\
		&\hspace{7cm} \ \ \ \ \ \ \  (\text{using (\ref{7}) and (\ref{dU})})\nonumber\\
		&=\frac{1}{C_{1}}\int_{0}^{s}U_{{\alpha_{1},\alpha_{2}}}(t-\tau)\tau^{\alpha_{1}-1} E_{\alpha_{1}-\alpha_{2}, \alpha_{1}}\left(-C_{2}\tau^{\alpha_{1}-\alpha_{2}}/C_{1}\right)\,\mathrm{d}\tau\nonumber\\
		&\ \  
		\ +\frac{1}{C_{1}^{2}}\int_{0}^{s} (s-\tau)^{\alpha_{1}} E_{\alpha_{1}-\alpha_{2}, \alpha_{1}+1}\left(-C_{2}(s-\tau)^{\alpha_{1}-\alpha_{2}}/C_{1}\right)\nonumber\\
		&\hspace{2cm}\cdot\tau^{\alpha_{1}-1} E_{\alpha_{1}-\alpha_{2}, \alpha_{1}}\left(-C_{2}\tau^{\alpha_{1}-\alpha_{2}}/C_{1}\right)\,\mathrm{d}\tau-U_{{\alpha_{1},\alpha_{2}}}(t)U_{{\alpha_{1},\alpha_{2}}}(s)\nonumber\\
		&=I(s,t)+\frac{s^{2\alpha_{1}}}{C_{1}^{2}} E_{\alpha_{1}-\alpha_{2}, 2\alpha_{1}+1}^{2}\left(-C_{2}s^{\alpha_{1}-\alpha_{2}}/C_{1}\right)-U_{{\alpha_{1},\alpha_{2}}}(t)U_{{\alpha_{1},\alpha_{2}}}(s),
	\end{align}
where in the last equality we have used (\ref{wwde1}). Moreover, 
	\begin{align}
		I(s,t)&=\frac{1}{C_{1}}\int_{0}^{s}U_{{\alpha_{1},\alpha_{2}}}(t-\tau)\tau^{\alpha_{1}-1} E_{\alpha_{1}-\alpha_{2}, \alpha_{1}}\left(-C_{2}\tau^{\alpha_{1}-\alpha_{2}}/C_{1}\right)\,\mathrm{d}\tau\nonumber\\
		&=\frac{1}{C_{1}^{2}}\int_{0}^{s}(t-\tau)^{\alpha_{1}}E_{\alpha_{1}-\alpha_{2}, \alpha_{1}+1}\left(-C_{2}(t-\tau)^{\alpha_{1}-\alpha_{2}}/C_{1}\right)\nonumber\\
		&\hspace{3cm}\cdot\tau^{\alpha_{1}-1}E_{\alpha_{1}-\alpha_{2}, \alpha_{1}}\left(-C_{2}\tau^{\alpha_{1}-\alpha_{2}}/C_{1}\right)\,\mathrm{d}\tau\nonumber\\
		&=\frac{1}{C_{1}^{2}}\int_{0}^{s}\sum_{m=0}^{\infty}\sum_{k=0}^{\infty}\frac{((-C_{2}/C_{1})(t-\tau)^{\alpha_{1}-\alpha_{2}})^{m}}{\Gamma(m(\alpha_{1}-\alpha_{2})+\alpha_{1}+1)}\frac{((-C_{2}/C_{1})\tau^{\alpha_{1}-\alpha_{2}})^{k}}{\Gamma(k(\alpha_{1}-\alpha_{2})+\alpha_{1})}\tau^{\alpha_{1}-1}(t-\tau)^{\alpha_{1}}\,\mathrm{d}\tau,\nonumber\\
		&\hspace{9.5cm}(\text{using (\ref{mit2})})\nonumber\\
		&=\frac{1}{C_{1}^{2}}\sum_{m=0}^{\infty}\sum_{k=0}^{\infty}\frac{(-C_{2}/C_{1})^{m+k}}{\Gamma(m(\alpha_{1}-\alpha_{2})+\alpha_{1}+1)\Gamma(k(\alpha_{1}-\alpha_{2})+\alpha_{1})}\nonumber\\
		&\hspace{3cm}\cdot\int_{0}^{s}\tau^{k(\alpha_{1}-\alpha_{2})+\alpha_{1}-1}(t-\tau)^{m(\alpha_{1}-\alpha_{2})+\alpha_{1}}\,\mathrm{d}\tau\nonumber\\
		&=\frac{1}{C_{1}^{2}}\sum_{m=0}^{\infty}\sum_{k=0}^{\infty}\frac{(-C_{2}/C_{1})^{m+k}t^{(m+k)(\alpha_{1}-\alpha_{2})+2\alpha_{1}}}{\Gamma(m(\alpha_{1}-\alpha_{2})+\alpha_{1}+1)\Gamma(k(\alpha_{1}-\alpha_{2})+\alpha_{1})}\nonumber\\
		&\hspace{2.3cm}\cdot\int_{0}^{s/t}z^{k(\alpha_{1}-\alpha_{2})+\alpha_{1}-1}(1-z)^{m(\alpha_{1}-\alpha_{2})+\alpha_{1}}\,\mathrm{d}z,\ \ (\text{substituting}\ \tau=tz)\nonumber\\
		&=\frac{1}{C_{1}^{2}}\sum_{m=0}^{\infty}\sum_{k=0}^{\infty}\frac{(-C_{2}/C_{1})^{m+k}t^{(m+k)(\alpha_{1}-\alpha_{2})+2\alpha_{1}}}{\Gamma(m(\alpha_{1}-\alpha_{2})+\alpha_{1}+1)\Gamma(k(\alpha_{1}-\alpha_{2})+\alpha_{1})}\nonumber\\
		&\hspace{4cm}\cdot B(k(\alpha_{1}-\alpha_{2})+\alpha_{1},m(\alpha_{1}-\alpha_{2})+\alpha_{1}+1;s/t)\label{qazq},
	\end{align}
	 where $B(a,b;x):=\displaystyle\int_{0}^{x} y^{a-1}(1-y)^{b-1}\,\mathrm{d}y$, $a>0$, $b>0$, is the incomplete Beta function. Using the following result (see  Leonenko $\textit{et al}$. (2014)):
	\begin{equation*}
		B(a,b ;x)=\frac{x^{a}}{a}+(1-b)\frac{x^{a+1}}{a+1}+O\left(x^{a+2}\right) \  \text{as}\   x \rightarrow 0,
	\end{equation*}
	and in the view of $s/t\to0$ as $t\to\infty$, we get
	\begin{align}
		I(s,t)&= \frac{1}{C_{1}^{2}}\sum_{m=0}^{\infty}\sum_{k=0}^{\infty}\frac{(-C_{2}/C_{1})^{m+k}t^{(m+k)(\alpha_{1}-\alpha_{2})+2\alpha_{1}}}{\Gamma(m(\alpha_{1}-\alpha_{2})+\alpha_{1}+1)\Gamma(k(\alpha_{1}-\alpha_{2})+\alpha_{1})}\Bigg(\frac{(s/t)^{k(\alpha_{1}-\alpha_{2})+\alpha_{1}}}{k(\alpha_{1}-\alpha_{2})+\alpha_{1}}\nonumber\\
		&\hspace{2cm} -\frac{(m(\alpha_{1}-\alpha_{2})+\alpha_{1})(s/t)^{k(\alpha_{1}-\alpha_{2})+\alpha_{1}+1}}{k(\alpha_{1}-\alpha_{2})+\alpha_{1}+1}+O\left((s/t)^{k(\alpha_{1}-\alpha_{2})+\alpha_{1}+2}\right)\Bigg)\nonumber\\
		&=\frac{1}{C_{1}^{2}}\sum_{m=0}^{\infty}\sum_{k=0}^{\infty}\frac{(-C_{2}/C_{1})^{m+k}t^{m(\alpha_{1}-\alpha_{2})+\alpha_{1}}s^{k(\alpha_{1}-\alpha_{2})+\alpha_{1}}}{\Gamma(m(\alpha_{1}-\alpha_{2})+\alpha_{1}+1)\Gamma(k(\alpha_{1}-\alpha_{2})+\alpha_{1})}\Bigg(\frac{1}{k(\alpha_{1}-\alpha_{2})+\alpha_{1}}\nonumber\\
		&\hspace{5cm} -\frac{(m(\alpha_{1}-\alpha_{2})+\alpha_{1})(s/t)}{k(\alpha_{1}-\alpha_{2})+\alpha_{1}+1}+O\big((s/t)^{2}\big)\Bigg)\label{wdwe}\\
		&\sim \frac{1}{C_{1}^{2}}\sum_{m=0}^{\infty}\sum_{k=0}^{\infty}\frac{(-C_{2}/C_{1})^{m+k}t^{m(\alpha_{1}-\alpha_{2})+\alpha_{1}}s^{k(\alpha_{1}-\alpha_{2})+\alpha_{1}}}{\Gamma(m(\alpha_{1}-\alpha_{2})+\alpha_{1}+1)\Gamma(k(\alpha_{1}-\alpha_{2})+\alpha_{1}+1)}\nonumber\\
		&=\frac{t^{\alpha_{1}}}{C_{1}}\sum_{m=0}^{\infty}\frac{(-C_{2}/C_{1})^{m}t^{m(\alpha_{1}-\alpha_{2})}}{\Gamma(m(\alpha_{1}-\alpha_{2})+\alpha_{1}+1)}\frac{s^{\alpha_{1}}}{C_{1}}\sum_{k=0}^{\infty}\frac{(-C_{2}/C_{1})^{k}s^{k(\alpha_{1}-\alpha_{2})}}{\Gamma(k(\alpha_{1}-\alpha_{2})+\alpha_{1}+1)}\nonumber\\
		&=\frac{t^{\alpha_{1}}}{C_{1}} E_{\alpha_{1}-\alpha_{2}, \alpha_{1}+1}\left(-C_{2}t^{\alpha_{1}-\alpha_{2}}/C_{1}\right)\frac{s^{\alpha_{1}}}{C_{1}} E_{\alpha_{1}-\alpha_{2}, \alpha_{1}+1}\left(-C_{2}s^{\alpha_{1}-\alpha_{2}}/C_{1}\right)\nonumber\\
		&=U_{{\alpha_{1},\alpha_{2}}}(t)U_{{\alpha_{1},\alpha_{2}}}(s).\label{4.2}
	\end{align} 
	 As $t\to\infty$, we substitute  (\ref{4.2}) in (\ref{4.1}) and use (\ref{Tau}) to obtain
	\begin{equation*}
		\operatorname{Cov}\left(Y_{\alpha_{1},\alpha_{2}}(s), Y_{\alpha_{1},\alpha_{2}}(t)\right)\sim\frac{s^{2\alpha_{1}}}{C_{1}^{2}} E_{\alpha_{1}-\alpha_{2}, 2\alpha_{1}+1}^{2}\left(-C_{2}s^{\alpha_{1}-\alpha_{2}}/C_{1}\right).
	\end{equation*}
	This completes the proof.
\end{proof}
\begin{remark}
	For $C_{1}=1$ and $C_{2}=0$, the inverse mixed stable subordinator $Y_{\alpha_{1},\alpha_{2}}$ reduces to inverse stable subordinator. The corresponding asymptotic result for the inverse stable subordinator $Y_\alpha$, $0<\alpha<1$, is obtained on taking $C_{1}=1$ and $C_{2}=0$ in (\ref{co}), and it is given by
	\begin{equation*}
		\operatorname{Cov}\left(Y_{\alpha}(s), Y_{\alpha}(t)\right)\sim \dfrac{s^{2\alpha}}{\Gamma(2\alpha+1)},
	\end{equation*}
	which agrees with the result obtained in  Leonenko $\textit{et al}$. (2014).
\end{remark}
\begin{proposition}
	Let $C_{1}\geq0,C_{2}\geq0$ such that $C_{1}+C_{2}=1$ and $0<\alpha_{2}<\alpha_{1}<1$. For large $t$, the variance of inverse mixed stable subordinator has the following limiting behaviour:
	\begin{equation}\label{3.5}
		\operatorname{Var}(Y_{\alpha_{1},\alpha_{2}}(t))\sim\frac{t^{2\alpha_{2}}}{C_{2}^{2}}\left(\frac{2}{\Gamma(2\alpha_{2}+1)}-\frac{1}{\left(\Gamma(\alpha_{2}+1)\right)^{2}}\right).
	\end{equation}
\end{proposition}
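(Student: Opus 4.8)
The plan is to start from the closed-form expression for the variance already derived in the display containing \eqref{3.3as} and the lines immediately following it, namely
\[
\operatorname{Var}(Y_{\alpha_{1},\alpha_{2}}(t)) = \frac{2t^{2\alpha_{1}}}{C_{1}^{2}}\, E_{\alpha_{1}-\alpha_{2}, 2\alpha_{1}+1}^{2}\left(-C_{2}t^{\alpha_{1}-\alpha_{2}}/C_{1}\right) - U_{\alpha_{1},\alpha_{2}}^{2}(t),
\]
and then extract the large-$t$ behaviour of each of the two terms separately using the Mittag-Leffler asymptotics collected in the preliminaries. Both terms turn out to be of exact order $t^{2\alpha_{2}}$, so neither dominates the other and the stated constant arises as the difference of their leading coefficients.

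For the first term I would apply the asymptotic relation \eqref{3.4} for the three-parameter Mittag-Leffler function with the identifications $\alpha = \alpha_{1}-\alpha_{2}$, $\beta = 2\alpha_{1}+1$, $\gamma = 2$, and $\lambda = C_{2}/C_{1}$. Here $\beta - \alpha\gamma = (2\alpha_{1}+1) - 2(\alpha_{1}-\alpha_{2}) = 2\alpha_{2}+1 \neq 0$, so the hypothesis $\beta \neq \alpha\gamma$ of \eqref{3.4} is met, which gives
\[
E_{\alpha_{1}-\alpha_{2}, 2\alpha_{1}+1}^{2}\left(-C_{2}t^{\alpha_{1}-\alpha_{2}}/C_{1}\right) \sim \frac{(C_{1}/C_{2})^{2}\, t^{-2(\alpha_{1}-\alpha_{2})}}{\Gamma(2\alpha_{2}+1)}, \quad t\to\infty.
\]
Multiplying by $2t^{2\alpha_{1}}/C_{1}^{2}$ collapses the powers of $t$ and of $C_{1}$, leaving $2t^{2\alpha_{2}}/(C_{2}^{2}\,\Gamma(2\alpha_{2}+1))$. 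For the second term I would use the large-$t$ asymptotic \eqref{Tau} for the renewal function, $U_{\alpha_{1},\alpha_{2}}(t) \sim t^{\alpha_{2}}/(C_{2}\,\Gamma(\alpha_{2}+1))$ (equivalently, \eqref{3.4} with $\gamma=1$ applied to the Mittag-Leffler factor in \eqref{7}), so that $U_{\alpha_{1},\alpha_{2}}^{2}(t) \sim t^{2\alpha_{2}}/(C_{2}^{2}\,(\Gamma(\alpha_{2}+1))^{2})$. Subtracting the two leading terms and factoring out $t^{2\alpha_{2}}/C_{2}^{2}$ yields exactly \eqref{3.5}.

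The only genuine subtlety is the bookkeeping: since both contributions are of the identical order $t^{2\alpha_{2}}$, the result is a difference of leading coefficients rather than a single dominant term, so I must retain the exact constants (not merely the orders) in both asymptotics and check that they do not cancel. They do not: the bracket $\frac{2}{\Gamma(2\alpha_{2}+1)} - \frac{1}{(\Gamma(\alpha_{2}+1))^{2}}$ is strictly positive for $0<\alpha_{2}<1$ (it equals $1$ at $\alpha_{2}=0$ and vanishes only in the degenerate limit $\alpha_{2}=1$), consistent with the variance being positive. Verifying the index arithmetic in \eqref{3.4} for the $\gamma=2$ case is the one place where a sign or shift error could creep in, so that computation is where I would be most careful.
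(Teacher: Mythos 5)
Your proposal is correct and takes essentially the same route as the paper's own proof, which likewise starts from the closed form \eqref{3.3as} and applies the three-parameter Mittag-Leffler asymptotic \eqref{3.4} (with $\gamma=2$, $\beta-\alpha\gamma=2\alpha_{2}+1$) to the first term and the renewal-function asymptotic \eqref{Tau} to $U_{\alpha_{1},\alpha_{2}}^{2}(t)$. Your extra observation that the bracket must be (and is) nonzero for the difference of two same-order asymptotics to constitute a genuine asymptotic equivalence is a point the paper leaves implicit, but it does not alter the argument.
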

\begin{proof}
	Using (\ref{Tau}) and (\ref{3.4}) in (\ref{3.3as}) for large $t$, we get
	\begin{align*}
		\operatorname{Var}(Y_{\alpha_{1},\alpha_{2}}(t))
		&\sim\frac{2t^{2\alpha_{2}}}{C_{2}^{2}}\frac{1}{\Gamma(2\alpha_{2}+1)}-\frac{t^{2\alpha_{2}}}{\left(C_{2}\Gamma(\alpha_{2}+1)\right)^{2}},
	\end{align*}
	which completes the proof.
\end{proof}
\begin{remark}
	The corresponding asymptotic result for the inverse stable subordinator $Y_\alpha$, $0<\alpha<1$, is obtained on taking $C_{1}=0$ and $C_{2}=1$ in (\ref{3.5}), and it is given by 
	\begin{equation*}
		\operatorname{Var}(Y_{\alpha}(t))\sim t^{2\alpha}\left(\frac{2}{\Gamma(2\alpha+1)}-\frac{1}{\left(\Gamma(\alpha+1)\right)^{2}}\right),
	\end{equation*}
	which agrees with the result obtained in Leonenko \textit{et al}. (2014).
\end{remark}
\begin{proposition}
	The inverse mixed stable subordinator has the LRD property.
\end{proposition}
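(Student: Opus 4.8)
The plan is to verify the defining condition of the LRD property for $Y_{\alpha_1,\alpha_2}$ directly, by computing the correlation and reading off the exponent $h$. By definition,
\[
\operatorname{Corr}\left(Y_{\alpha_1,\alpha_2}(s), Y_{\alpha_1,\alpha_2}(t)\right)
=\frac{\operatorname{Cov}\left(Y_{\alpha_1,\alpha_2}(s), Y_{\alpha_1,\alpha_2}(t)\right)}{\sqrt{\operatorname{Var}\left(Y_{\alpha_1,\alpha_2}(s)\right)}\,\sqrt{\operatorname{Var}\left(Y_{\alpha_1,\alpha_2}(t)\right)}},
\]
so I would fix $s\ge0$, let $t\to\infty$, and treat the numerator and the two variance factors separately. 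The whole argument is then just a matter of assembling the two preceding propositions.

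For the numerator I would invoke the covariance asymptotic (\ref{co}): as $t\to\infty$, $\operatorname{Cov}(Y_{\alpha_1,\alpha_2}(s), Y_{\alpha_1,\alpha_2}(t))$ is asymptotic to $\tfrac{s^{2\alpha_1}}{C_1^2}E_{\alpha_1-\alpha_2,2\alpha_1+1}^{2}(-C_2 s^{\alpha_1-\alpha_2}/C_1)$, a finite positive constant depending only on $s$. In the denominator, $\sqrt{\operatorname{Var}(Y_{\alpha_1,\alpha_2}(s))}$ is likewise a fixed positive constant, and the sole $t$-dependent factor is $\sqrt{\operatorname{Var}(Y_{\alpha_1,\alpha_2}(t))}$. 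For this I would apply the variance asymptotic (\ref{3.5}), which gives $\operatorname{Var}(Y_{\alpha_1,\alpha_2}(t))\sim \tfrac{t^{2\alpha_2}}{C_2^2}\bigl(\tfrac{2}{\Gamma(2\alpha_2+1)}-\tfrac{1}{(\Gamma(\alpha_2+1))^2}\bigr)$, so that $\sqrt{\operatorname{Var}(Y_{\alpha_1,\alpha_2}(t))}$ grows like a constant multiple of $t^{\alpha_2}$.

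Dividing, I would conclude that
\[
\operatorname{Corr}\left(Y_{\alpha_1,\alpha_2}(s), Y_{\alpha_1,\alpha_2}(t)\right)\sim c(s)\,t^{-\alpha_2},\qquad t\to\infty,
\]
with
\[
c(s)=\frac{C_2\,s^{2\alpha_1}\,E_{\alpha_1-\alpha_2,2\alpha_1+1}^{2}\left(-C_2 s^{\alpha_1-\alpha_2}/C_1\right)}{C_1^{2}\,\sqrt{\operatorname{Var}\left(Y_{\alpha_1,\alpha_2}(s)\right)}\,\sqrt{\dfrac{2}{\Gamma(2\alpha_2+1)}-\dfrac{1}{\left(\Gamma(\alpha_2+1)\right)^{2}}}}.
\]
This is precisely the form demanded by the definition of LRD, with $h=\alpha_2$. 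Since the hypothesis gives $0<\alpha_2<\alpha_1<1$, the exponent satisfies $h=\alpha_2\in(0,1)$, which is exactly the LRD range, and the proof is complete.

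The argument is essentially bookkeeping once the two earlier propositions are in hand, so there is no serious analytic obstacle; the only point needing a moment's care is to confirm that the constant $c(s)$ is genuinely finite, positive, and nonzero. Finiteness and positivity of the variance factor $\tfrac{2}{\Gamma(2\alpha_2+1)}-\tfrac{1}{(\Gamma(\alpha_2+1))^2}$ follow from the fact that $\operatorname{Var}(Y_{\alpha_1,\alpha_2}(t))>0$ together with the asymptotic (\ref{3.5}), which forces the leading coefficient to be positive; this legitimizes taking the square root and dividing. (In the degenerate cases $C_2=0$ or $C_1=0$ the process reduces to an inverse stable subordinator, for which the claim is recorded in the remarks following the covariance and variance propositions, so it suffices to argue the genuinely mixed regime $C_1,C_2>0$.) With that checked, reading off $h=\alpha_2\in(0,1)$ establishes the LRD property.
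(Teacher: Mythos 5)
Your proposal is correct and follows essentially the same route as the paper: divide the covariance asymptotic (\ref{co}) by $\sqrt{\operatorname{Var}(Y_{\alpha_1,\alpha_2}(s))}$ and the variance asymptotic (\ref{3.5}) to obtain $\operatorname{Corr}(Y_{\alpha_1,\alpha_2}(s),Y_{\alpha_1,\alpha_2}(t))\sim c(s)\,t^{-\alpha_2}$ and read off $h=\alpha_2\in(0,1)$. Your additional remarks on the positivity of the limiting constant and the degenerate cases $C_1=0$ or $C_2=0$ are sensible but not part of the paper's (terser) argument.
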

\begin{proof}
	For fixed $s\geq0$ and large $t$, from (\ref{co}) and (\ref{3.5}) we get	
	\begin{align*}
		\operatorname{Corr}\left(Y_{\alpha_{1},\alpha_{2}}(s), Y_{\alpha_{1},\alpha_{2}}(t)\right)&=\dfrac{\operatorname{Cov}\left(Y_{\alpha_{1},\alpha_{2}}(s), Y_{\alpha_{1},\alpha_{2}}(t)\right)}{\sqrt{\operatorname {Var} \left(Y_{\alpha_{1},\alpha_{2}}(s)\right)}\sqrt{\operatorname {Var} \left(Y_{\alpha_{1},\alpha_{2}}(t)\right)}}\\
		&\sim	\frac{\frac{s^{2\alpha_{1}}}{C_{1}^{2}} E_{\alpha_{1}-\alpha_{2}, 2\alpha_{1}+1}^{2}\left(-C_{2}s^{\alpha_{1}-\alpha_{2}}/C_{1}\right)}{\sqrt{\operatorname {Var}Y_{\alpha_{1},\alpha_{2}}(s)}\sqrt{\frac{t^{2\alpha_{2}}}{C_{2}^{2}}\left(\frac{2}{\Gamma(2\alpha_{2}+1)}-\frac{1}{\left(\Gamma(\alpha_{2}+1)\right)^{2}}\right)}}\\
		&=b(s)t^{-\alpha_{2}}.
	\end{align*}
	As $0<\alpha_2<1$, the result holds true.
\end{proof}
\begin{theorem}
	The MFPP has the LRD property.
\end{theorem}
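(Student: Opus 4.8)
The plan is to assemble the correlation of the MFPP directly from the moment formulas (\ref{E})--(\ref{C}) and then feed in the three asymptotic results already proved for the inverse mixed stable subordinator: the renewal-function estimate (\ref{Tau}), the covariance asymptotic (\ref{co}), and the variance asymptotic (\ref{3.5}). Fix $s>0$ and take $t>s$. The first step is to write
\[
\operatorname{Corr}\left(N^{\alpha_{1}, \alpha_{2}}(s), N^{\alpha_{1}, \alpha_{2}}(t)\right)=\frac{\operatorname{Cov}\left(N^{\alpha_{1}, \alpha_{2}}(s), N^{\alpha_{1}, \alpha_{2}}(t)\right)}{\sqrt{\operatorname{Var}\left(N^{\alpha_{1}, \alpha_{2}}(s)\right)}\,\sqrt{\operatorname{Var}\left(N^{\alpha_{1}, \alpha_{2}}(t)\right)}},
\]
and then study the numerator and each factor of the denominator separately as $t\to\infty$.

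For the numerator, since $s$ is fixed and $t>s$ we have $s\wedge t=s$, so (\ref{C}) gives
\[
\operatorname{Cov}\left(N^{\alpha_{1}, \alpha_{2}}(s), N^{\alpha_{1}, \alpha_{2}}(t)\right)=\lambda U_{\alpha_{1},\alpha_{2}}(s)+\lambda^{2}\operatorname{Cov}\left(Y_{\alpha_{1},\alpha_{2}}(s),Y_{\alpha_{1},\alpha_{2}}(t)\right).
\]
By the covariance asymptotic (\ref{co}) the second summand converges, as $t\to\infty$, to the finite positive constant $\lambda^{2}\frac{s^{2\alpha_{1}}}{C_{1}^{2}}E_{\alpha_{1}-\alpha_{2},2\alpha_{1}+1}^{2}\left(-C_{2}s^{\alpha_{1}-\alpha_{2}}/C_{1}\right)$, so the whole numerator tends to a finite constant $a(s)$ depending only on $s$. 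Likewise $\operatorname{Var}\left(N^{\alpha_{1}, \alpha_{2}}(s)\right)$ in the denominator is simply a positive constant in $s$.

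The remaining, and genuinely decisive, factor is $\sqrt{\operatorname{Var}(N^{\alpha_{1}, \alpha_{2}}(t))}$. Using (\ref{V}), (\ref{Tau}) and (\ref{3.5}),
\[
\operatorname{Var}\left(N^{\alpha_{1}, \alpha_{2}}(t)\right)=\lambda U_{\alpha_{1},\alpha_{2}}(t)+\lambda^{2}\operatorname{Var}\left(Y_{\alpha_{1}, \alpha_{2}}(t)\right)\sim \lambda^{2}\frac{t^{2\alpha_{2}}}{C_{2}^{2}}\left(\frac{2}{\Gamma(2\alpha_{2}+1)}-\frac{1}{\left(\Gamma(\alpha_{2}+1)\right)^{2}}\right).
\]
Here the \emph{key step} is the observation that, because $0<\alpha_{2}<1$ forces $2\alpha_{2}>\alpha_{2}$, the variance term of order $t^{2\alpha_{2}}$ dominates the mean term $\lambda U_{\alpha_{1},\alpha_{2}}(t)$ of order $t^{\alpha_{2}}$; consequently $\sqrt{\operatorname{Var}(N^{\alpha_{1}, \alpha_{2}}(t))}$ is asymptotically a constant multiple of $t^{\alpha_{2}}$. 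I expect this comparison of orders, together with verifying that the bracketed constant $2/\Gamma(2\alpha_{2}+1)-1/\left(\Gamma(\alpha_{2}+1)\right)^{2}$ is strictly positive (so that the square root and the final constant are well defined and nonzero), to be the only points requiring any care; both follow at once from $0<\alpha_{2}<1$.

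Combining the three estimates, the numerator tends to the constant $a(s)$, the $s$-factor of the denominator is a positive constant, and the $t$-factor grows like a constant multiple of $t^{\alpha_{2}}$. Hence $\operatorname{Corr}\left(N^{\alpha_{1}, \alpha_{2}}(s), N^{\alpha_{1}, \alpha_{2}}(t)\right)\sim c(s)\,t^{-\alpha_{2}}$ as $t\to\infty$ for a suitable constant $c(s)$ depending only on $s$. Since the exponent $h=\alpha_{2}$ lies in $(0,1)$, the definition of the LRD property is met and the theorem follows.
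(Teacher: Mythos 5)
Your proposal is correct and follows essentially the same route as the paper's own proof: both assemble $\operatorname{Corr}(N^{\alpha_{1},\alpha_{2}}(s),N^{\alpha_{1},\alpha_{2}}(t))$ from (\ref{E})--(\ref{C}) and then invoke (\ref{Tau}), (\ref{co}) and (\ref{3.5}) to extract the decay rate $t^{-\alpha_{2}}$ with $\alpha_{2}\in(0,1)$. The only difference is presentational: you make explicit the domination of the $t^{2\alpha_{2}}$ variance term over the $t^{\alpha_{2}}$ mean term and the positivity of the constant $2/\Gamma(2\alpha_{2}+1)-1/(\Gamma(\alpha_{2}+1))^{2}$, steps the paper uses implicitly when it passes from the two-term square root to $c(s)t^{-\alpha_{2}}$.
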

\begin{proof}
	For fixed $s\geq0$ and large $t$, we get
	\begin{align*}
		&\operatorname{Corr}(N^{\alpha_{1}, \alpha_{2}}(s), N^{\alpha_{1}, \alpha_{2}}(t))\\
		&=\dfrac{\operatorname{Cov}\left(N^{\alpha_{1}, \alpha_{2}}(s), N^{\alpha_{1}, \alpha_{2}}(t)\right)}{\sqrt{\operatorname {Var} \left(N^{\alpha_{1}, \alpha_{2}}(s)\right)}\sqrt{\operatorname {Var} \left(N^{\alpha_{1}, \alpha_{2}}(t)\right)}}\\
		&=\dfrac{\mathbb{E}\big(N^{\alpha_{1},\alpha_{2}}(s)\big)+\lambda^{2}\operatorname{Cov}\left(Y_{\alpha_{1},\alpha_{2}}(s), Y_{\alpha_{1},\alpha_{2}}(t)\right)}{\sqrt{\operatorname {Var} (N^{\alpha_{1}, \alpha_{2}}(s))}\sqrt{\lambda U_{{\alpha_{1},\alpha_{2}}}(t)+\lambda^{2}\operatorname {Var} Y_{\alpha_{1}, \alpha_{2}}(t)}}, \ \ (\text{using (\ref{V}) and (\ref{C})})\\
		&\sim \dfrac{\lambda U_{{\alpha_{1},\alpha_{2}}}(s)+\dfrac{\lambda^{2}s^{2\alpha_{1}}}{C_{1}^{2}} E_{\alpha_{1}-\alpha_{2}, 2\alpha_{1}+1}^{2}\left(-C_{2}s^{\alpha_{1}-\alpha_{2}}/C_{1}\right)}{\sqrt{\operatorname {Var} (N^{\alpha_{1}, \alpha_{2}}(s))}\sqrt{\dfrac{\lambda t^{\alpha_{2}}}{C_{2}\Gamma(1+\alpha_{2})}+\dfrac{\lambda^{2}t^{2\alpha_{2}}}{C_{2}^{2}}\left(\dfrac{2}{\Gamma(2\alpha_{2}+1)}-\dfrac{1}{\left(\Gamma(\alpha_{2}+1)\right)^{2}}\right)}}\\
		&\sim c(s)t^{-\alpha_{2}},
	\end{align*} 
	where in the penultimate step we used (\ref{Tau}), (\ref{E}), (\ref{co}) and (\ref{3.5}). This shows that the MFPP has the LRD property as $0<\alpha_2<\alpha_1<1$.
\end{proof}
\section{Mixed Fractional Poissonian Noise}
For a fixed $\delta>0$, the increments $Z^{\alpha_{1},\alpha_{2}}_{\delta}(t)$, $t\ge0$, of the MFPP $N^{\alpha_{1}, \alpha_{2}}$ is defined as
\begin{equation}\label{qlq1}
	Z^{\alpha_{1},\alpha_{2}}_{\delta}(t)=N^{\alpha_{1}, \alpha_{2}}(t+\delta)-N^{\alpha_{1}, \alpha_{2}}(t).
\end{equation}
The process denoted by $Z^{\alpha_{1},\alpha_{2}}_{\delta}=\{Z^{\alpha_{1},\alpha_{2}}_{\delta}(t)\}_{t\ge0}$ is called the mixed fractional Poissonian noise (MFPN).

We claim that MFPN exhibits the SRD property. To establish this we require the following asymptotic result for the covariance of MFPP.
\begin{proposition}
	For fixed $s\geq0$, we have
	\begin{align}\label{covs}
		&\operatorname{Cov}\left(N^{\alpha_{1}, \alpha_{2}}(s), N^{\alpha_{1}, \alpha_{2}}(t)\right)\sim \lambda L(s)-\lambda^2t^{\alpha_2-1}K(s),\ \ \mathrm{as}\ t\rightarrow\infty,
	\end{align}
	where $L(s)$ and $K(s)$ are constants depending on $s$.
\end{proposition}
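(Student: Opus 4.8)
The plan is to reduce the statement to a one-order-finer version of the asymptotics already obtained for the covariance of the inverse mixed stable subordinator. Since $s\ge0$ is fixed and $t\to\infty$, we have $s\wedge t=s$, so combining the covariance formula (\ref{C}) with the mean formula (\ref{E}) gives
\begin{equation*}
	\operatorname{Cov}\left(N^{\alpha_{1},\alpha_{2}}(s),N^{\alpha_{1},\alpha_{2}}(t)\right)=\lambda U_{\alpha_{1},\alpha_{2}}(s)+\lambda^{2}\operatorname{Cov}\left(Y_{\alpha_{1},\alpha_{2}}(s),Y_{\alpha_{1},\alpha_{2}}(t)\right).
\end{equation*}
The first summand is constant in $t$, so the whole problem is to expand $\operatorname{Cov}(Y_{\alpha_{1},\alpha_{2}}(s),Y_{\alpha_{1},\alpha_{2}}(t))$ one term beyond its limiting value in Proposition (\ref{co}), keeping the leading \emph{vanishing} correction.

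For this I would revisit the decomposition (\ref{4.1}) and the double-series expansion (\ref{wdwe}) of $I(s,t)$, which arose from the incomplete Beta expansion $B(a,b;x)=\frac{x^{a}}{a}+(1-b)\frac{x^{a+1}}{a+1}+O(x^{a+2})$ with $a=k(\alpha_{1}-\alpha_{2})+\alpha_{1}$ and $b=m(\alpha_{1}-\alpha_{2})+\alpha_{1}+1$. In the proof of Proposition (\ref{co}) only the leading Beta term $x^{a}/a$ was retained, and it summed \emph{exactly} to $U_{\alpha_{1},\alpha_{2}}(t)U_{\alpha_{1},\alpha_{2}}(s)$, cancelling the subtracted product in (\ref{4.1}) and leaving the constant $\frac{s^{2\alpha_{1}}}{C_{1}^{2}}E_{\alpha_{1}-\alpha_{2},2\alpha_{1}+1}^{2}(-C_{2}s^{\alpha_{1}-\alpha_{2}}/C_{1})$. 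The refinement is to retain the next Beta term $(1-b)\frac{x^{a+1}}{a+1}$, which carries an extra factor $s/t$.

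Using $\frac{m(\alpha_{1}-\alpha_{2})+\alpha_{1}}{\Gamma(m(\alpha_{1}-\alpha_{2})+\alpha_{1}+1)}=\frac{1}{\Gamma(m(\alpha_{1}-\alpha_{2})+\alpha_{1})}$ to collapse the $m$-sum, this next term equals
\begin{equation*}
	-\frac{s}{C_{1}^{2}t}\,t^{\alpha_{1}}E_{\alpha_{1}-\alpha_{2},\alpha_{1}}\left(-C_{2}t^{\alpha_{1}-\alpha_{2}}/C_{1}\right)\sum_{k=0}^{\infty}\frac{(-C_{2}/C_{1})^{k}s^{k(\alpha_{1}-\alpha_{2})+\alpha_{1}}}{\Gamma(k(\alpha_{1}-\alpha_{2})+\alpha_{1})\,(k(\alpha_{1}-\alpha_{2})+\alpha_{1}+1)}.
\end{equation*}
Since $\alpha_{1}-(\alpha_{1}-\alpha_{2})=\alpha_{2}$, the asymptotic (\ref{3.4}) gives $t^{\alpha_{1}}E_{\alpha_{1}-\alpha_{2},\alpha_{1}}(-C_{2}t^{\alpha_{1}-\alpha_{2}}/C_{1})\sim \frac{C_{1}}{C_{2}\Gamma(\alpha_{2})}t^{\alpha_{2}}$, so the displayed correction is of exact order $t^{\alpha_{2}-1}$ with an $s$-dependent constant factor. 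Feeding this back through (\ref{4.1}) yields $\operatorname{Cov}(Y_{\alpha_{1},\alpha_{2}}(s),Y_{\alpha_{1},\alpha_{2}}(t))\sim \frac{s^{2\alpha_{1}}}{C_{1}^{2}}E_{\alpha_{1}-\alpha_{2},2\alpha_{1}+1}^{2}(-C_{2}s^{\alpha_{1}-\alpha_{2}}/C_{1})-K(s)t^{\alpha_{2}-1}$, and substituting into the reduction above produces the claimed form with $\lambda L(s)=\lambda U_{\alpha_{1},\alpha_{2}}(s)+\frac{\lambda^{2}s^{2\alpha_{1}}}{C_{1}^{2}}E_{\alpha_{1}-\alpha_{2},2\alpha_{1}+1}^{2}(-C_{2}s^{\alpha_{1}-\alpha_{2}}/C_{1})$ and $K(s)=\frac{s}{C_{1}C_{2}\Gamma(\alpha_{2})}\sum_{k=0}^{\infty}\frac{(-C_{2}/C_{1})^{k}s^{k(\alpha_{1}-\alpha_{2})+\alpha_{1}}}{\Gamma(k(\alpha_{1}-\alpha_{2})+\alpha_{1})\,(k(\alpha_{1}-\alpha_{2})+\alpha_{1}+1)}$.

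I expect the main obstacle to be the remainder control rather than the leading computation: one must justify the term-by-term use of the Beta expansion inside the double series and show that the aggregate of the $O(x^{a+2})$ contributions is $o(t^{\alpha_{2}-1})$. The natural bookkeeping is that each further Beta term carries an additional factor $s/t$ and lowers the second Mittag-Leffler index by one, so by (\ref{3.4}) the $n$-th correction is of order $t^{\alpha_{2}-n}$; a uniform tail estimate on the double series, exploiting the entire-function growth of $E_{\alpha_{1}-\alpha_{2},\cdot}$ together with the smallness of $s/t$ for large $t$, then legitimises discarding all terms past the first correction, giving the two-term expansion.
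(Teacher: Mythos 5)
Your proposal follows essentially the same route as the paper's own proof: starting from the decomposition (\ref{4.1}) and the double series (\ref{wdwe}), you retain the second term of the incomplete Beta expansion, collapse the $m$-sum via $\frac{m(\alpha_{1}-\alpha_{2})+\alpha_{1}}{\Gamma(m(\alpha_{1}-\alpha_{2})+\alpha_{1}+1)}=\frac{1}{\Gamma(m(\alpha_{1}-\alpha_{2})+\alpha_{1})}$ into $E_{\alpha_{1}-\alpha_{2},\alpha_{1}}$, and apply (\ref{3.4}) to extract the $t^{\alpha_{2}-1}$ correction, exactly as the paper does (your $K(s)$ agrees with the paper's $C_{1}K_{0}(s)/C_{2}\Gamma(\alpha_{2})$ after the identity $\Gamma(a)(a+1)=\Gamma(a+2)/a$, and your $L(s)$ coincides with theirs). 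The proof is correct, and your closing remark on controlling the aggregated $O((s/t)^{2})$ remainder is a point the paper itself glosses over rather than a divergence in method.
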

\begin{proof}
	Using (\ref{wdwe}), we obtain the following for large $t$:
	\begin{align*}
		I(s,t)&\sim\frac{1}{C_{1}^{2}}\sum_{m=0}^{\infty}\sum_{k=0}^{\infty}\frac{(-C_{2}/C_{1})^{m+k}t^{m(\alpha_{1}-\alpha_{2})+\alpha_{1}}s^{k(\alpha_{1}-\alpha_{2})+\alpha_{1}}}{\Gamma(m(\alpha_{1}-\alpha_{2})+\alpha_{1}+1)\Gamma(k(\alpha_{1}-\alpha_{2})+\alpha_{1})}\\
		&\hspace{2.6cm}	\cdot\Bigg(\frac{1}{k(\alpha_{1}-\alpha_{2})+\alpha_{1}}\nonumber
		-\frac{(m(\alpha_{1}-\alpha_{2})+\alpha_{1})(s/t)}{k(\alpha_{1}-\alpha_{2})+\alpha_{1}+1}\Bigg)\\
		&=U_{\alpha_{1},\alpha_{2}}(s)U_{\alpha_{1},\alpha_{2}}(t) -K_{0}(s)\sum_{m=0}^{\infty}\frac{(-C_{2}/C_{1})^{m}t^{m(\alpha_{1}-\alpha_{2})+\alpha_{1}-1}}{\Gamma(m(\alpha_{1}-\alpha_{2})+\alpha_{1})},
	\end{align*}
	where 
	\begin{equation*}
		K_{0}(s)=\frac{s^{\alpha_1+1}}{C_{1}^{2}}\sum_{k=0}^{\infty}\frac{\left(k(\alpha_{1}-\alpha_{2})+\alpha_{1}\right)(-C_{2}s^{(\alpha_{1}-\alpha_{2})}/C_{1})^{k}}{\Gamma\left(k(\alpha_{1}-\alpha_{2})+\alpha_{1}+2\right)}.
	\end{equation*}
	Thus, 
	\begin{align*}
		I(s,t)&= U_{\alpha_{1},\alpha_{2}}(s)U_{\alpha_{1},\alpha_{2}}(t)-t^{\alpha_1-1}K_{0}(s)E_{\alpha_{1}-\alpha_{2}, \alpha_{1}}\left(-C_{2}t^{\alpha_{1}-\alpha_{2}}/C_{1}\right)\\
		&\sim U_{\alpha_{1},\alpha_{2}}(s)U_{\alpha_{1},\alpha_{2}}(t)-t^{\alpha_2-1}K(s),\ \ \ \ \ \ \ \text{(using (\ref{3.4}))},
	\end{align*}
	where $K(s)=C_{1}K_{0}(s)/C_{2}\Gamma(\alpha_{2})$.
	Thus,
	\begin{equation}\label{covst1}
		\operatorname{Cov}\left(Y_{\alpha_{1},\alpha_{2}}(s), Y_{\alpha_{1},\alpha_{2}}(t)\right)\sim\frac{s^{2\alpha_{1}}}{C_{1}^{2}} E_{\alpha_{1}-\alpha_{2}, 2\alpha_{1}+1}^{2}\left(-C_{2}s^{\alpha_{1}-\alpha_{2}}/C_{1}\right)-t^{\alpha_2-1}K(s).
	\end{equation}	
	Substituting (\ref{covst1}) in (\ref{C}), we get
	\begin{align*}
		\operatorname{Cov}&\left(N^{\alpha_{1}, \alpha_{2}}(s), N^{\alpha_{1}, \alpha_{2}}(t)\right)\nonumber\\
		&\sim\lambda U_{{\alpha_{1},\alpha_{2}}}(s)+\frac{\lambda^{2}s^{2\alpha_{1}}}{C_{1}^{2}} E_{\alpha_{1}-\alpha_{2}, 2\alpha_{1}+1}^{2}\left(-C_{2}s^{\alpha_{1}-l}/C_{1}\alpha_{2}\right)-\lambda^2t^{\alpha_2-1}K(s).
	\end{align*}
	On setting
	\begin{equation*}
		L(s)=U_{{\alpha_{1},\alpha_{2}}}(s)+\frac{\lambda s^{2\alpha_{1}}}{C_{1}^{2}} E_{\alpha_{1}-\alpha_{2}, 2\alpha_{1}+1}^{2}\left(-C_{2}s^{\alpha_{1}-\alpha_{2}}/C_{1}\right),
	\end{equation*}
	the proof follows.
\end{proof}	
\begin{theorem}
	The MFPN $Z^{\alpha_{1},\alpha_{2}}_{\delta}$ has the SRD property.
\end{theorem}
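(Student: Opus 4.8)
The plan is to reduce the statement to two asymptotic computations—one for the cross-covariance $\operatorname{Cov}(Z^{\alpha_{1},\alpha_{2}}_{\delta}(s),Z^{\alpha_{1},\alpha_{2}}_{\delta}(t))$ with $s$ fixed and $t\to\infty$, and one for the variance $\operatorname{Var}(Z^{\alpha_{1},\alpha_{2}}_{\delta}(t))$ as $t\to\infty$—and then to assemble the correlation. First I would expand the increment covariance by bilinearity into the four terms $\operatorname{Cov}(N^{\alpha_{1},\alpha_{2}}(s+\delta),N^{\alpha_{1},\alpha_{2}}(t+\delta))$, $\operatorname{Cov}(N^{\alpha_{1},\alpha_{2}}(s+\delta),N^{\alpha_{1},\alpha_{2}}(t))$, $\operatorname{Cov}(N^{\alpha_{1},\alpha_{2}}(s),N^{\alpha_{1},\alpha_{2}}(t+\delta))$ and $\operatorname{Cov}(N^{\alpha_{1},\alpha_{2}}(s),N^{\alpha_{1},\alpha_{2}}(t))$, each of which has a fixed first argument and a second argument tending to infinity, so that the asymptotic (\ref{covs}) applies. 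Substituting $\sim\lambda L(\cdot)-\lambda^{2}(\cdot)^{\alpha_{2}-1}K(\cdot)$ into all four, the constant $\lambda L$ contributions cancel in the telescoping combination with signs $(+,-,-,+)$, leaving $\lambda^{2}[K(s+\delta)-K(s)]\,[\,t^{\alpha_{2}-1}-(t+\delta)^{\alpha_{2}-1}\,]$. A first-order expansion $t^{\alpha_{2}-1}-(t+\delta)^{\alpha_{2}-1}\sim(1-\alpha_{2})\delta\,t^{\alpha_{2}-2}$ then yields $\operatorname{Cov}(Z^{\alpha_{1},\alpha_{2}}_{\delta}(s),Z^{\alpha_{1},\alpha_{2}}_{\delta}(t))\sim\lambda^{2}(1-\alpha_{2})\delta[K(s+\delta)-K(s)]\,t^{\alpha_{2}-2}$.

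For the variance I would first use (\ref{V}) and (\ref{C}) to write $\operatorname{Var}(Z^{\alpha_{1},\alpha_{2}}_{\delta}(t))=\lambda\,[U_{\alpha_{1},\alpha_{2}}(t+\delta)-U_{\alpha_{1},\alpha_{2}}(t)]+\lambda^{2}\operatorname{Var}(Y_{\alpha_{1},\alpha_{2}}(t+\delta)-Y_{\alpha_{1},\alpha_{2}}(t))$, where the cross term produces exactly $\lambda\,U_{\alpha_{1},\alpha_{2}}(t)$ because $(t+\delta)\wedge t=t$. For the inverse-subordinator increment I would feed the covariance formula (\ref{Vei}) into $\operatorname{Var}(Y_{\alpha_{1},\alpha_{2}}(t+\delta))+\operatorname{Var}(Y_{\alpha_{1},\alpha_{2}}(t))-2\operatorname{Cov}(Y_{\alpha_{1},\alpha_{2}}(t+\delta),Y_{\alpha_{1},\alpha_{2}}(t))$; the bulk integrals over $[0,t]$ cancel and one is left with the clean identity $\operatorname{Var}(Y_{\alpha_{1},\alpha_{2}}(t+\delta)-Y_{\alpha_{1},\alpha_{2}}(t))=2\int_{t}^{t+\delta}U_{\alpha_{1},\alpha_{2}}(t+\delta-\tau)\,\mathrm{d}U_{\alpha_{1},\alpha_{2}}(\tau)-(U_{\alpha_{1},\alpha_{2}}(t+\delta)-U_{\alpha_{1},\alpha_{2}}(t))^{2}$. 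Substituting $u=t+\delta-\tau$ and using the density asymptotic $\frac{\mathrm{d}}{\mathrm{d}t}U_{\alpha_{1},\alpha_{2}}(t)\sim t^{\alpha_{2}-1}/(C_{2}\Gamma(\alpha_{2}))$, which follows from (\ref{dU}) and (\ref{3.4}), the integral behaves like $\frac{2t^{\alpha_{2}-1}}{C_{2}\Gamma(\alpha_{2})}\int_{0}^{\delta}U_{\alpha_{1},\alpha_{2}}(u)\,\mathrm{d}u$, while the squared boundary term is of order $t^{2\alpha_{2}-2}=o(t^{\alpha_{2}-1})$. Hence both surviving pieces are of order $t^{\alpha_{2}-1}$, giving $\operatorname{Var}(Z^{\alpha_{1},\alpha_{2}}_{\delta}(t))\sim\frac{t^{\alpha_{2}-1}}{C_{2}\Gamma(\alpha_{2})}\big(\lambda\delta+2\lambda^{2}\int_{0}^{\delta}U_{\alpha_{1},\alpha_{2}}(u)\,\mathrm{d}u\big)$, with a strictly positive constant.

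Assembling the correlation with $s$ fixed, $\operatorname{Var}(Z^{\alpha_{1},\alpha_{2}}_{\delta}(s))$ is a positive constant, the numerator is of order $t^{\alpha_{2}-2}$ and $\sqrt{\operatorname{Var}(Z^{\alpha_{1},\alpha_{2}}_{\delta}(t))}$ is of order $t^{(\alpha_{2}-1)/2}$, so $\operatorname{Corr}(Z^{\alpha_{1},\alpha_{2}}_{\delta}(s),Z^{\alpha_{1},\alpha_{2}}_{\delta}(t))\sim c(s)\,t^{(\alpha_{2}-2)-(\alpha_{2}-1)/2}=c(s)\,t^{-(3-\alpha_{2})/2}$. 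Setting $h=(3-\alpha_{2})/2$ and using $0<\alpha_{2}<1$ gives $h\in(1,3/2)\subset(1,2)$, which is exactly the SRD range, so the MFPN has the SRD property.

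The main obstacle is a double cancellation. In the cross-covariance the leading $\lambda L$ terms cancel, so one must keep the next-order term of (\ref{covs}) and expand $t^{\alpha_{2}-1}-(t+\delta)^{\alpha_{2}-1}$ to the correct order; in the variance the dominant $t^{2\alpha_{2}}$ parts of the three quantities $\operatorname{Var}(Y_{\alpha_{1},\alpha_{2}}(t+\delta))$, $\operatorname{Var}(Y_{\alpha_{1},\alpha_{2}}(t))$ and $2\operatorname{Cov}(Y_{\alpha_{1},\alpha_{2}}(t+\delta),Y_{\alpha_{1},\alpha_{2}}(t))$ must cancel, which is why passing through the exact integral identity rather than the three separate $t^{2\alpha_{2}}$ asymptotics is essential—only then does the genuine $t^{\alpha_{2}-1}$ order survive. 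A minor technical point is justifying that $\frac{\mathrm{d}}{\mathrm{d}t}U_{\alpha_{1},\alpha_{2}}(t+\delta-u)\sim t^{\alpha_{2}-1}/(C_{2}\Gamma(\alpha_{2}))$ uniformly for $u\in[0,\delta]$ so it may be pulled out of the integral, and observing that $K(s+\delta)\neq K(s)$ so that $c(s)\neq0$.
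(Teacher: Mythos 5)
Your argument is correct, and its first half coincides with the paper's: the four-term bilinear expansion of $\operatorname{Cov}(Z^{\alpha_{1},\alpha_{2}}_{\delta}(s),Z^{\alpha_{1},\alpha_{2}}_{\delta}(t))$, the exact cancellation of the $\lambda L$ terms coming from (\ref{covs}), and the expansion $t^{\alpha_{2}-1}-(t+\delta)^{\alpha_{2}-1}\sim(1-\alpha_{2})\delta t^{\alpha_{2}-2}$ are precisely the steps leading to the paper's (\ref{221q}). Where you genuinely depart from the paper is the variance. The paper substitutes the leading-order asymptotics (\ref{covmfpn}) and (\ref{varmfpn}) into $\operatorname{Var}(N^{\alpha_{1},\alpha_{2}}(t+\delta))+\operatorname{Var}(N^{\alpha_{1},\alpha_{2}}(t))-2\operatorname{Cov}(N^{\alpha_{1},\alpha_{2}}(t),N^{\alpha_{1},\alpha_{2}}(t+\delta))$ and discards the resulting Mittag--Leffler difference terms, concluding in (\ref{1234}) that $\operatorname{Var}(Z^{\alpha_{1},\alpha_{2}}_{\delta}(t))\sim\lambda\alpha_{2}\delta\,t^{\alpha_{2}-1}/(C_{2}\Gamma(1+\alpha_{2}))$, i.e.\ that the $\lambda^{2}$ contribution is $O(t^{2\alpha_{2}-2})$. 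You instead pass through the exact identity
\begin{equation*}
	\operatorname{Var}\big(Y_{\alpha_{1},\alpha_{2}}(t+\delta)-Y_{\alpha_{1},\alpha_{2}}(t)\big)
	=2\int_{t}^{t+\del ta}U_{\alpha_{1},\alpha_{2}}(t+\delta-\tau)\,\mathrm{d}U_{\alpha_{1},\alpha_{2}}(\tau)
	-\big(U_{\alpha_{1},\alpha_{2}}(t+\delta)-U_{\alpha_{1},\alpha_{2}}(t)\big)^{2}
\end{equation*}
(the integrals over $[0,t]$ in (\ref{Vei}) cancel), and only then use $\mathrm{d}U_{\alpha_{1},\alpha_{2}}(\tau)/\mathrm{d}\tau\sim\tau^{\alpha_{2}-1}/(C_{2}\Gamma(\alpha_{2}))$, obtaining
\begin{equation*}
	\operatorname{Var}(Z^{\alpha_{1},\alpha_{2}}_{\delta}(t))\sim\frac{t^{\alpha_{2}-1}}{C_{2}\Gamma(\alpha_{2})}\Big(\lambda\delta+2\lambda^{2}\int_{0}^{\delta}U_{\alpha_{1},\alpha_{2}}(u)\,\mathrm{d}u\Big).
\end{equation*}
These answers disagree: your $\lambda^{2}$ term is of the same order $t^{\alpha_{2}-1}$ as the $\lambda$ term, not of lower order, and yours is the correct constant. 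A check in the pure stable case $C_{1}=0$, where $U(x)=x^{\alpha}/\Gamma(1+\alpha)$ and every quantity is an explicit power function, gives $\operatorname{Var}(Y_{\alpha}(t+\delta)-Y_{\alpha}(t))\sim 2\delta^{\alpha+1}t^{\alpha-1}/(\Gamma(\alpha)\Gamma(\alpha+2))$, which matches your formula (and, in that special case, the fractional Poissonian noise computation of Maheshwari and Vellaisamy (2016)) but not the paper's. The defect in the paper is exactly the pitfall you flag: it subtracts relations that are only leading-order asymptotics, so the discarded errors are of the same order as the terms that survive the cancellation; your exact-identity route repairs this. None of this changes the theorem, since both constants are positive: in either version $\operatorname{Corr}(Z^{\alpha_{1},\alpha_{2}}_{\delta}(s),Z^{\alpha_{1},\alpha_{2}}_{\delta}(t))\sim d(s)\,t^{-(3-\alpha_{2})/2}$ with $(3-\alpha_{2})/2\in(1,3/2)$, so the SRD property follows. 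One caveat you share with the paper: the four-fold cancellation in the covariance step is legitimate only if the error in (\ref{covs}) is $o(t^{\alpha_{2}-2})$, which requires keeping the next-order term in the expansion behind that proposition; neither you nor the paper makes this explicit.
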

\begin{proof}
	Let $s\ge0$ be fixed such that $0\le s+\delta\le t$. From (\ref{qlq1}) and (\ref{covs}), we have
	\begin{align}
		\operatorname{Cov}&(Z^{\alpha_{1},\alpha_{2}}_{\delta}(s),Z^{\alpha_{1},\alpha_{2}}_{\delta}(t))\nonumber\\
		&=\operatorname{Cov}\left(N^{\alpha_{1}, \alpha_{2}}(s+\delta)-N^{\alpha_{1}, \alpha_{2}}(s),N^{\alpha_{1}, \alpha_{2}}(t+\delta)-N^{\alpha_{1}, \alpha_{2}}(t)\right)\nonumber\\
		&=\operatorname{Cov}\left(N^{\alpha_{1}, \alpha_{2}}(s+\delta),N^{\alpha_{1}, \alpha_{2}}(t+\delta)\right)+\operatorname{Cov}\left(N^{\alpha_{1}, \alpha_{2}}(s),N^{\alpha_{1}, \alpha_{2}}(t)\right)\nonumber\\
		&\ \ \ -\operatorname{Cov}\left(N^{\alpha_{1}, \alpha_{2}}(s+\delta),N^{\alpha_{1}, \alpha_{2}}(t)\right)-\operatorname{Cov}\left(N^{\alpha_{1}, \alpha_{2}}(s),N^{\alpha_{1}, \alpha_{2}}(t+\delta)\right)\nonumber\\
		&\sim\lambda^2(t^{\alpha_2-1}K(s+\delta)+(t+\delta)^{\alpha_2-1}K(s)-(t+\delta)^{\alpha_2-1}K(s+\delta)-t^{\alpha_2-1}K(s))\nonumber\\
		&=\lambda^2(K(s+\delta)-K(s))(t^{\alpha_2-1}-(t+\delta)^{\alpha_2-1})\nonumber\\
		&=\lambda^2(K(s+\delta)-K(s))t^{\alpha_2-1}\left(1-\left(1+\frac{\delta }{t}\right)^{\alpha_2-1}\right)\nonumber\\
		&\sim(1-\alpha_2)\delta\lambda^2(K(s+\delta)-K(s))t^{\alpha_2-2}.\label{221q}
	\end{align}
	From (\ref{4.1}), we have
	\begin{align}
		\operatorname{Cov}\left(Y_{\alpha_{1},\alpha_{2}}(t), Y_{\alpha_{1},\alpha_{2}}(t+\delta)\right)&=\frac{t^{2\alpha_{1}}}{C_{1}^{2}} E_{\alpha_{1}-\alpha_{2}, 2\alpha_{1}+1}^{2}\left(-C_{2}t^{\alpha_{1}-\alpha_{2}}/C_{1}\right)\nonumber\\
		&\ \ \ -U_{\alpha_{1}, \alpha_{2}}(t)U_{\alpha_{1}, \alpha_{2}}(t+\delta)+ I(t,t+\delta),\label{csq}
	\end{align}	
	where
	\begin{equation*}
		I(t,t+\delta)=\frac{1}{C_{1}}\int_{0}^{t}U_{{\alpha_{1},\alpha_{2}}}(t+\delta-\tau)\tau^{\alpha_{1}-1} E_{\alpha_{1}-\alpha_{2}, \alpha_{1}}\left(-C_{2}\tau^{\alpha_{1}-\alpha_{2}}/C_{1}\right)\,\mathrm{d}\tau.
	\end{equation*}	
	On using (\ref{Tau}) and a result from asymptotic analysis (see Olver (1974), Section 8.2), we get the following for large $t$:
	\begin{align}
		I(t,t+\delta)&\sim\frac{1}{C_{1}}\int_{0}^{t}\frac{(t+\delta-\tau)^{\alpha_{2}}}{C_{2}\Gamma(1+\alpha_{2})}\tau^{\alpha_{1}-1}E_{\alpha_{1}-\alpha_{2},\alpha_{1}}\left(-C_{2}\tau^{\alpha_{1}-\alpha_{2}}/C_{1}\right)\,\mathrm{d}\tau\nonumber\\
		&=\frac{1}{C_{1}C_{2}\Gamma(1+\alpha_{2})}\int_{0}^{t}\sum_{k=0}^{\infty}\frac{((-C_{2}/C_{1})\tau^{\alpha_{1}-\alpha_{2}})^{k}}{\Gamma(k(\alpha_{1}-\alpha_{2})+\alpha_{1})}\tau^{\alpha_{1}-1}(t+\delta-\tau)^{\alpha_{2}}\,\mathrm{d}\tau\nonumber\\
		&=\frac{1}{C_{1}C_{2}\Gamma(1+\alpha_{2})}\sum_{k=0}^{\infty}\frac{((-C_{2}/C_{1}))^{k}}{\Gamma(k(\alpha_{1}-\alpha_{2})+\alpha_{1})}\int_{0}^{t}\tau^{k(\alpha_{1}-\alpha_{2})+\alpha_{1}-1}(t+\delta-\tau)^{\alpha_{2}}\,\mathrm{d}\tau\nonumber\\
		&=\sum_{k=0}^{\infty}\frac{(-C_{2}/C_{1})^{k}(t+\delta)^{k(\alpha_{1}-\alpha_{2})+\alpha_{1}+\alpha_{2}}}{C_{1}C_{2}\Gamma(1+\alpha_{2})\Gamma(k(\alpha_{1}-\alpha_{2})+\alpha_{1})}\int_{0}^{t/t+\delta}z^{k(\alpha_{1}-\alpha_{2})+\alpha_{1}-1}(1-z)^{\alpha_{2}}\,\mathrm{d}z,\nonumber\\
		&\hspace{6.5cm}(\text{substituting}\ \tau=(t+\delta)z)\nonumber\\
		&=\sum_{k=0}^{\infty}\frac{(-C_{2}/C_{1})^{k}(t+\delta)^{k(\alpha_1-\alpha_2)+\alpha_{1}+\alpha_{2}}}{C_{1}C_{2}\Gamma(1+\alpha_{2})\Gamma(k(\alpha_1-\alpha_2)+\alpha_{1})}B\left(k(\alpha_1-\alpha_2)+\alpha_{1},\alpha_{2}+1;\dfrac{t}{t+\delta}\right)\nonumber\\
		&\sim\sum_{k=0}^{\infty}\frac{(-C_{2}/C_{1})^{k}(t+\delta)^{k(\alpha_1-\alpha_2)+\alpha_{1}+\alpha_{2}}}{C_{1}C_{2}\Gamma(k(\alpha_1-\alpha_2)+\alpha_{1}+\alpha_{2}+1)}\nonumber\\
		&=\frac{(t+\delta)^{\alpha_{1}+\alpha_{2}}}{C_{1}C_{2}}E_{\alpha_{1}-\alpha_{2}, \alpha_{1}+\alpha_{2}+1}\left(-C_{2}(t+\delta)^{\alpha_{1}-\alpha_{2}}/C_{1}\right),\label{qsc}
	\end{align}
	where in the penultimate step we used the following result for large $t$:
	\begin{equation*}
		B\left(k(\alpha_1-\alpha_2)+\alpha_{1},\alpha_{2}+1;\dfrac{t}{t+\delta}\right)\sim B\left(k(\alpha_1-\alpha_2)+\alpha_{1},\alpha_{2}+1\right).
	\end{equation*}
	From (\ref{C}), we have
	\begin{align}\label{covmfpn}
		&\operatorname{Cov}(N^{\alpha_{1}, \alpha_{2}}(t),N^{\alpha_{1}, \alpha_{2}}(t+\delta))\nonumber
		\\&=\lambda U_{\alpha_{1},\alpha_{2}}(t)+\lambda^{2}\operatorname{Cov}\left(Y_{\alpha_{1},\alpha_{2}}(t), Y_{\alpha_{1},\alpha_{2}}(t+\delta)\right)\nonumber\\
		&\sim\lambda U_{\alpha_{1},\alpha_{2}}(t)+\lambda^{2}\Big(\frac{t^{2\alpha_{1}}}{C_{1}^{2}} E_{\alpha_{1}-\alpha_{2}, 2\alpha_{1}+1}^{2}\left(-C_{2}t^{\alpha_{1}-\alpha_{2}}/C_{1}\right)-U_{\alpha_{1},\alpha_{2}}(t)U_{\alpha_{1},\alpha_{2}}(t+\delta)\nonumber\\
		&\hspace{2.5cm}+\dfrac{(t+\delta)^{\alpha_{1}+\alpha_{2}}}{C_{1}C_{2}}E_{\alpha_{1}-\alpha_{2}, \alpha_{1}+\alpha_{2}+1}\left(-C_{2}(t+\delta)^{\alpha_{1}-\alpha_{2}}/C_{1}\right)\Big),
	\end{align}
	where we have used (\ref{csq}) and (\ref{qsc}). Substituting $\delta=0$ in (\ref{covmfpn}), we get 
	\begin{align}\label{varmfpn}
		\operatorname{Var}(N^{\alpha_{1}, \alpha_{2}}(t))&\sim \lambda U_{\alpha_{1},\alpha_{2}}(t)+\lambda^{2}\Big(\frac{t^{2\alpha_{1}}}{C_{1}^{2}} E_{\alpha_{1}-\alpha_{2}, 2\alpha_{1}+1}^{2}\left(-C_{2}t^{\alpha_{1}-\alpha_{2}}/C_{1}\right)\nonumber\\
		&\ \ \ -U^2_{\alpha_{1},\alpha_{2}}(t)+\dfrac{t^{\alpha_{1}+\alpha_{2}}}{C_{1}C_{2}}E_{\alpha_{1}-\alpha_{2}, \alpha_{1}+\alpha_{2}+1}\left(-C_{2}t^{\alpha_{1}-\alpha_{2}}/C_{1}\right)\Big).
	\end{align}
	Using (\ref{covmfpn}) and (\ref{varmfpn}), we have
	\begin{align}\label{1234}
		\operatorname{Var}(Z^{\alpha_{1},\alpha_{2}}_{\delta}(t))&=\operatorname{Var}(N^{\alpha_{1}, \alpha_{2}}(t+\delta))+\operatorname{Var}(N^{\alpha_{1}, \alpha_{2}}(t))-2\operatorname{Cov}\left(N^{\alpha_{1}, \alpha_{2}}(t),N^{\alpha_{1}, \alpha_{2}}(t+\delta)\right)\nonumber\\
		&\sim\lambda\left(U_{\alpha_{1},\alpha_{2}}(t+\delta)-U_{\alpha_{1},\alpha_{2}}(t)\right)+\lambda^{2}\left(2U_{\alpha_{1},\alpha_{2}}(t)U_{\alpha_{1},\alpha_{2}}(t+\delta)-U^2_{\alpha_{1},\alpha_{2}}(t)\right.\nonumber\\
		&\ \ \ \ -U^2_{\alpha_{1},\alpha_{2}}(t+\delta)\Big)+\lambda^{2}\left(\frac{(t+\delta)^{2\alpha_{1}}}{C_{1}^{2}} E_{\alpha_{1}-\alpha_{2}, 2\alpha_{1}+1}^{2}\left(-C_{2}(t+\delta)^{\alpha_{1}-\alpha_{2}}/C_{1}\right)\right.\nonumber\\
		&\ \ \ \ -\dfrac{t^{2\alpha_{1}}}{C_{1}^2}E^{2}_{\alpha_{1}-\alpha_{2}, 2\alpha_{1}+1}\left(-C_{2}t^{\alpha_{1}-\alpha_{2}}/C_{1}\right)\nonumber\\
		&\ \ \ \ +\dfrac{t^{\alpha_{1}+\alpha_{2}}}{C_{1}C_{2}}E_{\alpha_{1}-\alpha_{2}, \alpha_{1}+\alpha_{2}+1}\left(-C_{2}t^{\alpha_{1}-\alpha_{2}}/C_{1}\right)\nonumber\\
		&\ \ \ \ \left.-\dfrac{(t+\delta)^{\alpha_{1}+\alpha_{2}}}{C_{1}C_{2}}E_{\alpha_{1}-\alpha_{2},\alpha_{1}+\alpha_{2}+1}\left(-C_{2}(t+\delta)^{\alpha_{1}-\alpha_{2}}/C_{1}\right)\right)\nonumber\\
		&\sim \frac{\lambda}{C_{2}\Gamma(1+\alpha_{2})}\left((t+\delta)^{\alpha_{2}}-t^{\alpha_{2}}\right)-\dfrac{\lambda^{2}}{C_{2}^{2}(\Gamma(1+\alpha_{2}))^{2}}\left((t+\delta)^{\alpha_{2}}-t^{\alpha_{2}}\right)^{2},\nonumber\\
		&\hspace{7cm}(\mathrm{using}\ (\ref{Tau})\ \mathrm{and}\ (\ref{3.4}))\nonumber\\
		&=\dfrac{\lambda t^{\alpha_{2}}}{C_{2}\Gamma(1+\alpha_{2})}\left(\left(1+\dfrac{\delta}{t}\right)^{\alpha_{2}}-1\right)-\dfrac{\lambda^{2}t^{2\alpha_{2}}}{C_{2}^{2}(\Gamma(1+\alpha_{2}))^{2}}\left(\left(1+\frac{\delta}{t}\right)^{\alpha_{2}}-1\right)^{2}\nonumber\\
		&\sim\dfrac{\lambda \alpha_{2}\delta}{C_{2}\Gamma(1+\alpha_{2})}t^{\alpha_{2}-1}-\dfrac{\lambda^{2}\alpha_{2}^{2}\delta^{2}}{C_{2}^{2}(\Gamma(1+\alpha_{2}))^{2}}t^{2\alpha_{2}-2}\nonumber\\
		&\sim\dfrac{\lambda \alpha_{2}\delta}{C_{2}\Gamma(1+\alpha_{2})}t^{\alpha_{2}-1}.
	\end{align}
		From (\ref{221q}) and (\ref{1234}) it follows for large $t$ that
	\begin{align*}
		\dfrac{\operatorname{Cov}\left(Z^{\alpha_{1},\alpha_{2}}_{\delta}(s),Z^{\alpha_{1},\alpha_{2}}_{\delta}(t)\right)}{\sqrt{\operatorname{Var}Z^{\alpha_{1},\alpha_{2}}_{\delta}(s)}\sqrt{\operatorname{Var}Z^{\alpha_{1},\alpha_{2}}_{\delta}(t)}}
		\sim\dfrac{(1-\alpha_2)\delta\lambda^2(K(s+\delta)-K(s))t^{\alpha_2-2}}{\sqrt{\operatorname{Var}Z^{\alpha_{1},\alpha_{2}}_{\delta}(s)}\sqrt{\dfrac{\lambda \alpha_{2}\delta}{C_{2}\Gamma(1+\alpha_{2})}t^{\alpha_{2}-1}}}.
	\end{align*}
	Thus,
	\begin{equation*}
		\operatorname{Corr}\left(Z^{\alpha_{1},\alpha_{2}}_{\delta}(s),Z^{\alpha_{1},\alpha_{2}}_{\delta}(t)\right)\sim d(s)t^{-(3-\alpha_{2})/2},\ \ \mathrm{as}\ t\rightarrow\infty.
	\end{equation*}
	As $1<(3-\alpha_{2})/2<1.5$, the result follows using the definition of SRD.
\end{proof}
\section{Concluding Remarks}
The LRD property for a non-stationary process, namely, the MFPP has been established. This is achieved by proving an asymptotic result for the covariance of inverse mixed stable subordinator. Also, we have shown that the increments of MFPP exhibits the SRD property.

%\printbibliography
\end{document}